\newtheorem{theorem}{Theorem}[section]
\newtheorem{definition}[theorem]{Definition}
\newtheorem{proposition}[theorem]{Proposition}
\newtheorem{corollary}[theorem]{Corollary}
\newtheorem{twierdzenie}[theorem]{Twierdzenie}
\newtheorem{stwierdzenie}[theorem]{Stwierdzenie}
\newtheorem{wniosek}[theorem]{Wniosek}
\newtheorem{cwiczenie}[theorem]{\'Cwiczenie}
\long\def\symbolfootnote[#1]#2{\begingroup%
\def\thefootnote{\fnsymbol{footnote}}\footnote[#1]{#2}\endgroup}
\newcommand\Z{{\mathbb Z}}
\begin{document}

\title{\bf ${\bf \it q}$-polynomial invariant of rooted trees}
\author{J\'ozef H. Przytycki}
\thispagestyle{empty}

\begin{abstract}
We describe in this note a new invariant of rooted trees. We argue that the invariant is interesting on it own, and that it has 
connections to knot theory and homological algebra. However, the real reason that we propose this invariant to readers  
 is that we deal here with an elementary, interesting, new mathematics, and after reading this essay 
readers can take part in developing the topic, inventing new results and connections to other disciplines of mathematics,  
and likely, statistical mechanics, and combinatorial biology. We also provide a (free) translation of the paper in Polish.

\end{abstract}

\maketitle
\markboth{\hfil{\sc  $q$-polynomial of rooted trees}\hfil}
\ \
\tableofcontents

\section{Introduction}\label{Section 1.1}

\subsection{Quantum plane}\

We know well the Newton binomial formula:
$$(x+y)^n= \sum_{i=0}^n {n \choose i} x^iy^{n-i}.$$
Of course we assume here that the variables commute, that is $yx=xy$.

We can also visualize formulas for coefficients of $(x+y)^n$ by considering the tree $T_{b,a}$ of two long branches of length $b$ and $a$ 
respectively, coming from the root (Figure 1.1).
We can ask now how many different ways there are  to ``pluck" the tree  $T_{b,a}$ one leaf at a time. Of course we can tear (like in the child's play 
``likes not like") the left or right branch. In total: the left should be torn $b$ times and the right $a$ times so the answer about the number of 
different pluckings is clearly ${a+b \choose a}$. Our notes present a dramatic generalization of this example.

\ \\
\centerline{\psfig{figure=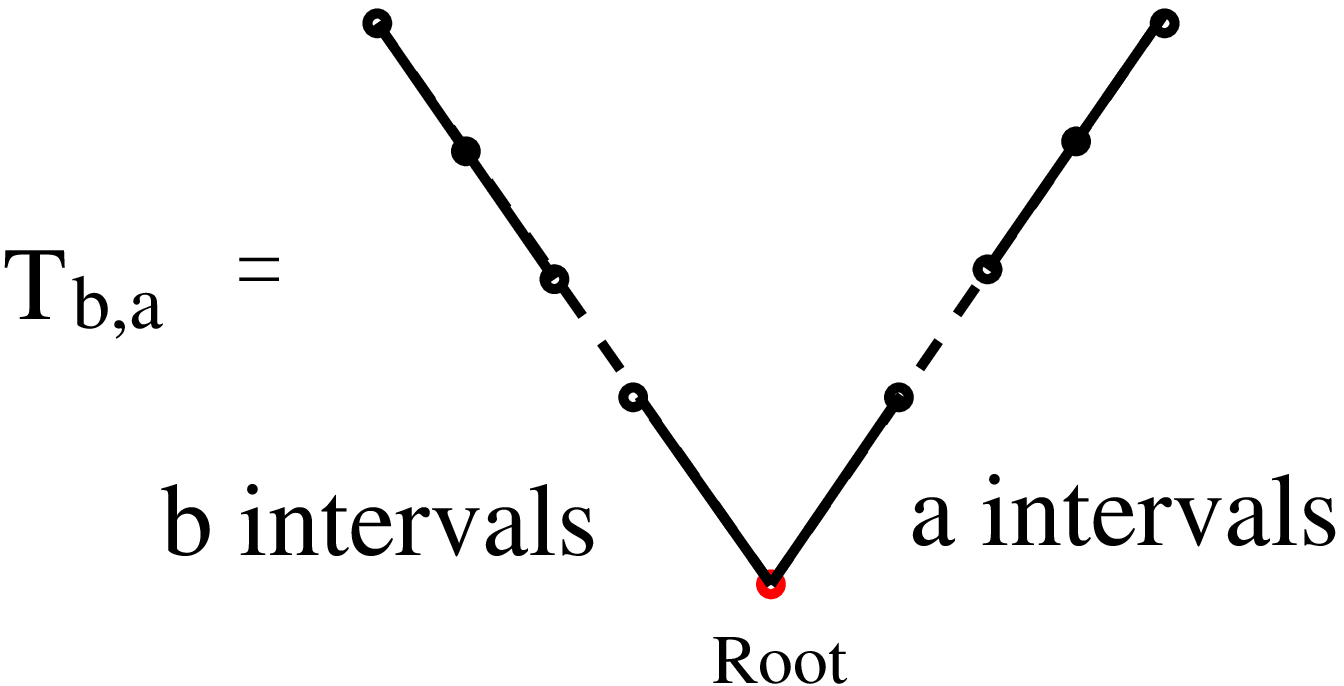,height=3.5cm}}\ \\
\centerline{Figure 1.1; the tree $T_{b,a}$ with branches of length $b$ and $a$}\ \\
\ \\

We can ask now what happens with the binomial formula if we weaken commutativity by replacing it with $yx=qxy$ ($q$ commutes with $x$ and $y$).
In applications in physics, $q$ is often taken to be a complex number but it is better to work generally with the ring $Z[q]$ and 
q-commutative\footnote{By $q$-commutative we understand exactly $yx=qxy$.} polynomials of variables $x$ and $y$ over this ring 
(we call this ring of polynomials the quantum plane or noncommutative plane).
The formula for $(x+y)^n$ in the quantum plane was known already in the XIX century, but if we do not know the result it is good to first 
work out small examples:
$(x+y)^2= y^2+ xy +yx + x^2= y^2 + (1+q)xy +x^2,$ \\
$(x+y)^3= y^3+ xy^2+ yxy +y^2x + x^2y+ xyx + yx^2 +x^3 =$ \\
$y^3 + (1+q+q^2)xy^2 + (1+q+q^2)x^2y+ y^3,$\\
$(x+y)^4= y^4 + xy^3+ yxy^2 +y^2xy + y^3x + x^2y^2 + xyxy + xy^2x + yx^2y + yxyx + y^2x^2 +
          x^3y+ x^2yx + xyx^2 + yx^3 + x^4 =$
$ y^4 +  (1+q+q^2+q^3)xy^3 + (1+ q + 2q^2+ q^3 + q^4)x^2y^2 + (1+q+q^2+q^3)x^3y +x^4 =$
         $ y^4+(1+q+q^2+q^3)xy^3 + (1+q^2)(1+q+q^2)x^2y^2+ (1+q+q^2+q^3)x^3y +x^4$.\\
Observe that we can think of  $1+q+...+q^{n-1}$ as a $q$-analogue of the number $n$.
We define formally $[n]_q=1+q+...+q^{n-1}$. In particular, for $q=1$, $[n]_q=n$.
In the same vein we define $q$ factorial of the $q$-number $[n]_q$ as: $[n]_q!=[n]_q[n-1]_q\cdots [2]_q[1]_q$, and
$q$-analogue of the binomial coefficient\footnote{Called also the Gaussian polynomial, $q$-polynomial of Gauss, Gaussian binomial coefficient, Gaussian coefficient, 
or $q$-binomial coefficient.} by  $\binom{n}{i}_q = \frac{[n]_q!}{[i]_q! [n-i]_q!}$.
Sometimes to stress symmetry, $i \leftrightarrow n-i$, of the Gaussian polynomial we write  
$\binom{n}{i,n-i}_q$. We also observe that the coefficient of $x^2y^2$ in 
$(x+y)^4$ is $(1+q^2)(1+q+q^2)= \frac{1+q+q^2+q^3}{1+q}(1+q+q^2)=\frac{[4]_q[3]_q}{[2]_q}= \binom{4}{2}_q$.

In the notation we introduced, our calculations can be concisely written as:
$$(x+y)^2= y^2 +[2]_qxy +x^2,$$
$$(x+y)^3= y^3 + [3]_qxy^2 + [3]_qx^2y+x^3,$$
$$(x+y)^4= y^4+ [4]_qxy^3 + \binom{4}{2}_qx^2y^2 + [4]_qx^3y+ x^4.$$
Now it is not difficult to guess the general formula for $(x+y)^n$ in the quantum plane:
\begin{proposition}\label{Proposition 1.1} If $yx=qxy$ then
$$(x+y)^n= \sum_{i=0}^n {n \choose i}_qx^iy^{n-i}.$$
\end{proposition}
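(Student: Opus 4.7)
The natural plan is proof by induction on $n$, mirroring the classical Newton binomial argument but bookkeeping the powers of $q$ that arise when moving $y$'s past $x$'s.

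First I would record the auxiliary identity $yx^i = q^i x^i y$, which follows by a short induction from the defining relation $yx=qxy$ (and the hypothesis that $q$ commutes with $x$ and $y$). The base case $n=1$ of the main statement is trivial since $\binom{1}{0}_q=\binom{1}{1}_q=1$. For the inductive step, assuming the formula holds at level $n$, I would compute
\begin{align*}
(x+y)^{n+1} &= (x+y)\sum_{i=0}^{n}\binom{n}{i}_q x^{i}y^{n-i}\\
&= \sum_{i=0}^{n}\binom{n}{i}_q x^{i+1}y^{n-i} + \sum_{i=0}^{n}\binom{n}{i}_q yx^{i}y^{n-i},
\end{align*}
and then use the auxiliary identity to rewrite the second sum as $\sum_{i=0}^{n}\binom{n}{i}_q q^{i} x^{i}y^{n+1-i}$. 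After re-indexing the first sum by $j=i+1$, the coefficient of $x^{j}y^{n+1-j}$ becomes
$$\binom{n}{j-1}_q + q^{j}\binom{n}{j}_q,$$
with the usual convention that $\binom{n}{-1}_q=\binom{n}{n+1}_q=0$ takes care of the boundary terms $j=0$ and $j=n+1$.

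The step that carries the real content is then the $q$-Pascal identity
$$\binom{n+1}{j}_q = \binom{n}{j-1}_q + q^{j}\binom{n}{j}_q,$$
which I would verify by clearing denominators in $\binom{n}{j-1}_q=\frac{[n]_q!}{[j-1]_q![n-j+1]_q!}$ and $\binom{n}{j}_q=\frac{[n]_q!}{[j]_q![n-j]_q!}$, factoring out $\frac{[n]_q!}{[j]_q![n-j+1]_q!}$, and checking the easy arithmetic identity $[j]_q + q^{j}[n-j+1]_q = [n+1]_q$ directly from the definition $[m]_q=1+q+\cdots+q^{m-1}$.

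I do not anticipate a serious obstacle: the only non-routine move is choosing whether to commute $y$ past $x^i$ (giving the factor $q^{i}$ and the $q$-Pascal identity above) or to commute $x$ past $y^{n-i}$ (which would yield the mirror identity $\binom{n+1}{j}_q=q^{n+1-j}\binom{n}{j-1}_q+\binom{n}{j}_q$); either choice closes the induction, so the main care is simply to keep the index shift and the exponent of $q$ consistent.
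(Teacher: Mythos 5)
Your proposal is correct and follows essentially the same route as the paper: induction on $n$, expanding $(x+y)(x+y)^{n-1}$, using $yx^i=q^ix^iy$ to collect the factor $q^i$, and closing with the $q$-Pascal identity $\binom{n+1}{j}_q=\binom{n}{j-1}_q+q^j\binom{n}{j}_q$, which the paper likewise verifies via $[a+b]_q=[a]_q+q^a[b]_q$ and the factorial definition. The only cosmetic difference is that you state the auxiliary commutation rule $yx^i=q^ix^iy$ explicitly, which the paper uses silently.
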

\begin{proof}
The simplest proof is by induction on $n$ however one can also find proofs without words, interpreting 
combinatorially left and right sides of the equation.\\
{\bf Inductive proof:} We already checked the formula for $n\leq 4$, and we should add that we need the convention (as in the classical case).\\ 
that $[0]_q!=1$ and consequently ${n \choose 0}_q=1 ={n \choose n}_q$. Now we perform the inductive step (from $n-1$ to $n$):
$$(x+y)^n= (x+y)(x+y)^{n-1}=(x+y)\sum_{i=0}^{n-1} {n-1 \choose i,n-i-1}_qx^iy^{n-i-1}=$$
$$\sum_{i=0}^{n-1} {n-1 \choose i,n-i-1}_qx^{i+1}y^{n-i-1} +\sum_{i=0}^{n-1}q^i{n-1 \choose i,n-i-1}_qx^iy^{n-i} =$$
$$\sum_{i=0}^n({n-1 \choose i-1,n-i}_q +q^i{n-1 \choose i,n-i-1}_q)x^iy^{n-i}.$$
We use here the convention that ${n-1 \choose -1,n}_q=0$. \\
We are left now to check that $${n-1 \choose i-1,n-i}_q +q^i{n-1 \choose i,n-i-1}_q)={n \choose i}_q.$$ 
We encourage the reader to check it by themselves before looking at the following calculation.
\begin{enumerate}
\item[(i)] $[a+b]_q= 1+q+...+q^{a+b-1}= (1+q+...+q^{a-1}) + q^a(1+q+...+q^{b-1})= [a]_q+q^a[b]_q= [b]_q+q^b[a]_q$.
\item[(ii)] $$\binom{a+b}{a,b}_q=\frac{[a+b]_q!}{[a]_q! [b]_q!}=[a+b]_q\frac{[a+b-1]_q!}{[a]_q! [b]_q!}=$$
$$([a]_q+q^a[b]_q)\frac{[a+b-1]_q!}{[a]_q! [b]_q!}= \frac{[a+b-1]_q!}{[a-1]_q! [b]_q!} + q^a \frac{[a+b-1]_q!}{[a]_q! [b-1]_q!}= $$
$$\binom{a+b-1}{a-1,b}_q + q^a\binom{a+b-1}{a,b-1}_q= \binom{a+b-1}{a,b-1}_q + q^b\binom{a+b-1}{a-1,b}_q.$$
\end{enumerate} 
\end{proof}

We can now go back to the tree $T_{b,a}$ and play the game in the $q$-fashion that is the leaf taken from the right ($a$) branch is 
counted as $1$ but the leaf taken from the left is counted with the weight $q^a$. Thus we eventually get not the plucking number but 
the plucking polynomial $Q(T_{b,a})$ which by definition satisfies the recursive relation $Q(T_{b,a})= Q(T_{b,a-1}) + q^aQ(T_{b-1,a})$. 
We also notice that $Q(T_{0,a})=Q(T_{b,0})=1$, thus we immediately recognize that the plucking polynomial is $Q(T_{b,a})= \binom{a+b}{a,b}_q$. 
This is the starting point to our definition of the polynomial of plane rooted trees mentioned in the title of the note.

We can repeat our considerations with many variables, $x_1,x_2,...,x_k$. If variables commute we get the familiar multinomial formula
(e.g. familiar to every student taking multivariable calculus):
$$(x_1+x_2+...+x_k)^n= \sum_{a_1,...,a_k; \sum a_i=n}^n {n \choose a_1,...,a_k}x_1^{a_1} x_2^{a_2}\cdots x_k^{a_k},$$
where ${n \choose a_1,...,a_k} = \frac{[n]!}{[a_1]!...[a_k]!}$.
As before the pleasant interpretation of ${n \choose a_1,...,a_k}$ is the number of pluckings of a tree $T_{a_k,...,a_2,a_1}$ of $k$ long 
branches of length $a_k,...,a_2,a_1$, respectively, as illustrated in Figure 1.2.

\ \\
\centerline{\psfig{figure=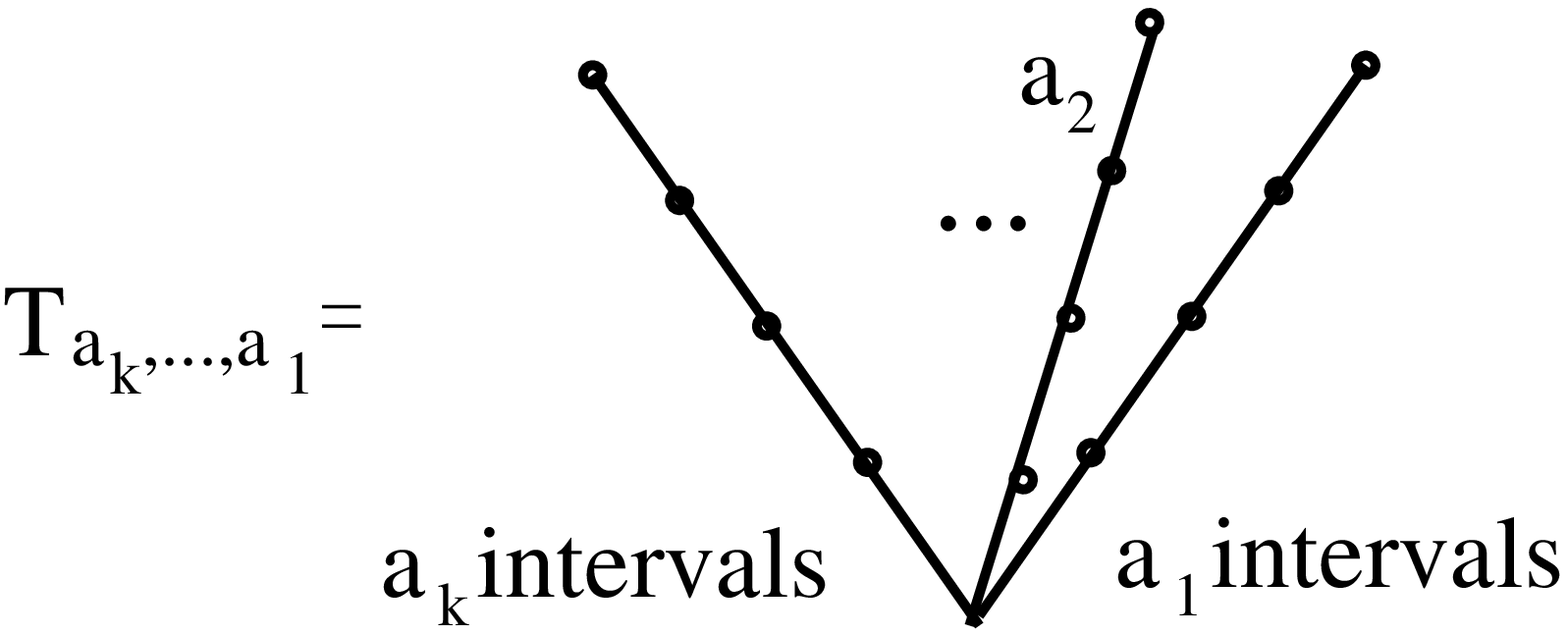,height=4.5cm}}\ \\
\centerline{Figure 1.2; the tree $T_{a_k,...,a_1}$ with branches of length $a_k,...,a_1$}\ \\
\ \\

We can now consider a noncommutative space with $x_jx_i=qx_ix_j$, for $i<j$. We will get the q-multinomial formula:
$$(x_1+x_2+...+x_k)^n= \sum_{a_1,...,a_k; \sum a_i=n}^n {n \choose a_1,...,a_k}_qx_1^{a_1} x_2^{a_2}\cdots x_k^{a_k},$$
where ${n \choose a_1,...,a_k}_q = \frac{[n]_q!}{[a_1]_q!...[a_k]_q!}$.
To prove the formula we can again use an induction on $n$ and again the following two properties are of value:
\begin{enumerate}
\item[(i)] $[a_1+a_2+...+a_k]_q= [a_1]_q + q^{a_1}[a_2]_q + q^{a_1+a_2}[a_2]_q+...+q^{a_1+a_2+...+a_{k-1}}[a_k]_q.$
\item[(ii)] $${a_1+a_2+...+a_k \choose a_1,a_2,...,a_k}_q = 
{a_1+a_2+...+a_k-1 \choose a_1-1,a_2,...,a_k}_q + $$
$$ q^{a_1}{a_1+a_2+...+a_k-1 \choose a_1,a_2-1,...,a_k}_q+...
+ q^{a_1+a_2+...+a_{k-1}} {a_1+a_2+...+a_k-1 \choose a_1,a_2,...,a_k-1}_q.$$
\end{enumerate} 
 
As in the $q$-binomial case, we can interpret the $q$-multinomial coefficients by $q$-plucking the tree   $T_{a_k,...,a_2,a_1}$, that is 
assuming the following plucking formula 
$$Q(T_{a_k,...,a_2,a_1}) = Q(T_{a_k,...,a_2,a_1-1}) + q^{a_1}Q(T_{a_k,...,a_2-1,a_1}) +$$
$$ q^{a_1+a_2}Q(T_{a_k,...,a_3-1,a_2,a_1}) + ... + q^{a_1+a_2+...+a_{k-1}}Q(T_{a_k-1,...,a_2,a_1}).$$  
The recursion is the same as for the $q$-multinomial coefficient so we conclude that 
$$Q(T_{a_k,...,a_2,a_1}) = {a_1+a_2+...+a_k \choose a_1,a_2,...,a_k}_q.$$ 

One can find more properties of $q$-binomial coefficients in \cite{K-C}.

\section{Recursive definition of $q$-polynomial of plane rooted tree}\

Our description of the quantum plane and noncommutative space is already over one hundred years old (e.g. MacMahon (1854-1929)). 
It may, therefore, look surprising that a simple generalization to rooted trees, which we present here, is totally new.
Maybe the explanation lies in the observation that, because there are so many $q$-analogues, a new one is studied only if there 
is an outside reason to do so (like the Jones revolution in knot theory, in my case). Perhaps our $q$-polynomial is buried somewhere 
in a work of MacMahon contemporaries?

We start our definition from the polynomial of a plane (that is embedded in the plane) rooted tree. 
In Corollary \ref{Corollary 2.3} (iii) 
 we show that the polynomial does not depend on the plane embedding and is therefore an invariant of rooted trees. 

In our work we use the convention that trees are growing up (like in Figure 2.1).   
\begin{definition}\label{definition 2.1}
Consider the plane rooted tree $T$ ( compare  Figure 2.1). We associate to $T$ the polynomial
$Q(T,q)$ (or succinctly $Q(T)$) in the variable $q$ as follows.
\begin{enumerate}
\item[(i)] If $T$  is the one vertex tree, then $Q(T,q)=1$.
\item[(ii)] If $T$ has some edges (i.e. $|E(T)|>0$) then 
$$Q(T,q)=\sum_{v \in \mbox{ leaves }}q^{r(T,v)}Q(T-v,q), $$ 
where the sum is taken over all leaves, that is vertices of degree $1$ (not a root), of $T$
 and $r(T,v)$ is the number of edges of $T$ to the right of the unique path connecting $v$ with the root
 (an example is given in Figure 2.1).
\end{enumerate}
In Figure 2.2 we give an example of the expansion by our formula and if we complete the calculation we get the
polynomial $Q(T)$:
$$1+4q+9q^2+17q^3+28q^4+41q^5+56q^6+ 71q^7+83q^8+91q^9+94q^{10}+ $$
$$91q^{11}+ 83q^{12}+ 71q^{13}+ 56q^{14}+ 41q^{15}+ 28q^{16}+ 17q^{17}+9q^{18}+4q^{19}+q^{20}.\footnote{We show later that it is in 
fact equal to $[2]_q^2[4]_q{8\choose 3,5}_q;$ Corollary \ref{Corollary 2.3} (ii).}$$
\ \\
\centerline{\psfig{figure=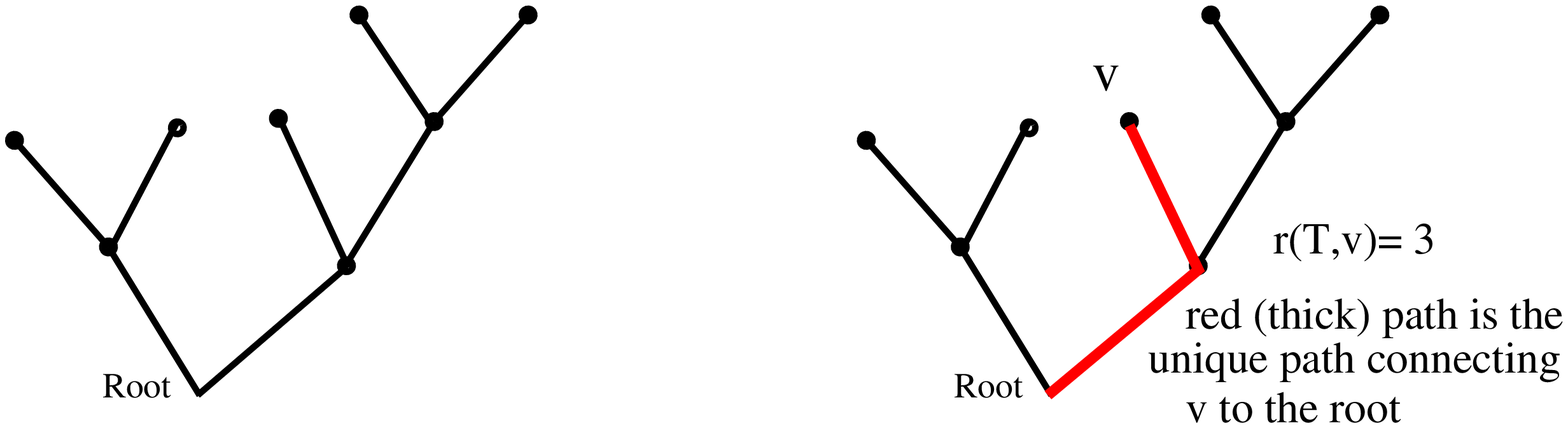,height=3.3cm}}
\ \\ \ \\
\centerline{ Figure 2.1; Plane rooted tree and an example of $r(T,v)$}
\ \\ \ \\

\centerline{\psfig{figure=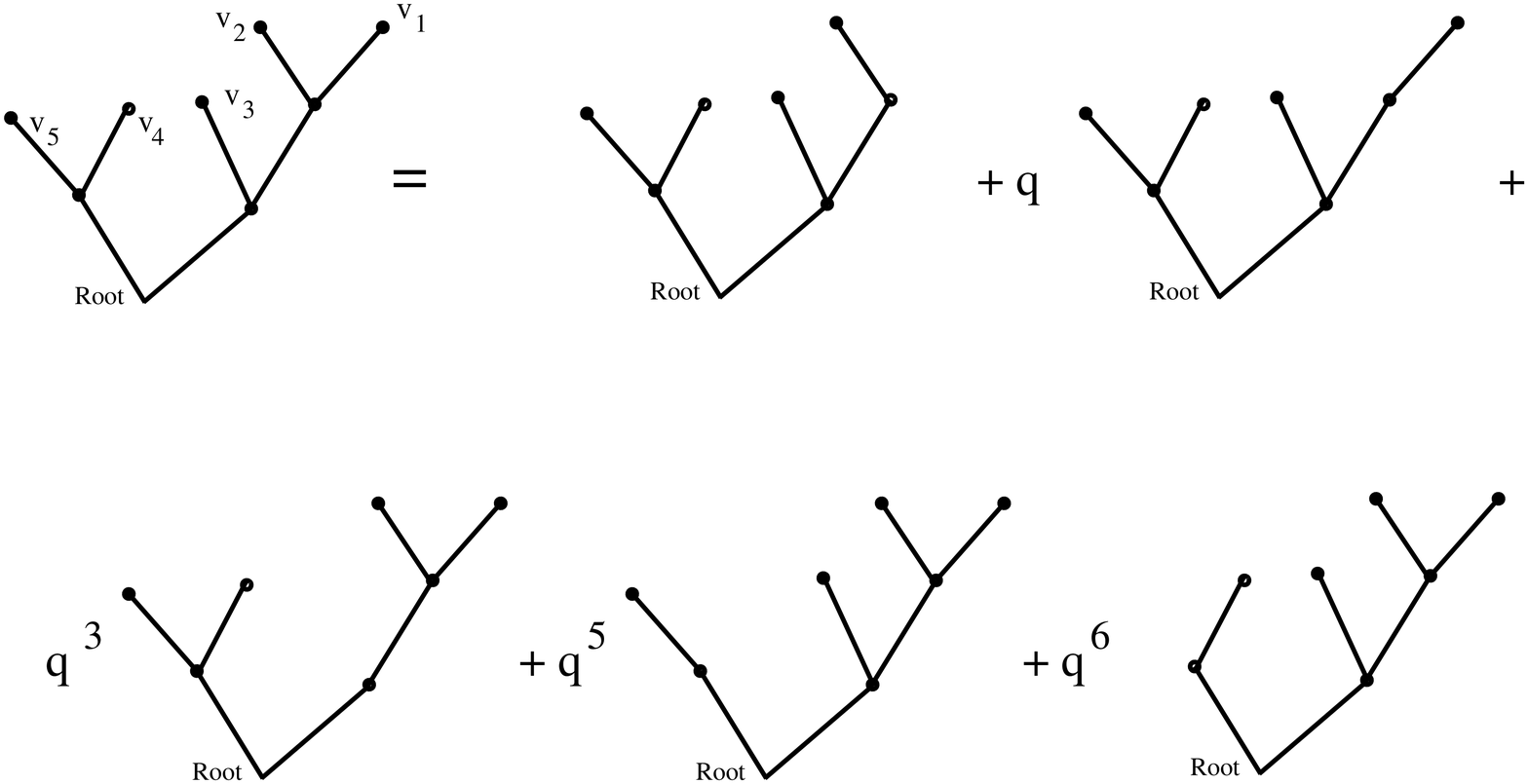,height=6.5cm}}
\ \\ \ \\
\centerline{ Figure 2.2; Tree expansion in the q-world}
\end{definition}

As an example we can check that for $T_{1,1,...,1}$ a star with $n$ rays we get $Q(T_{1,1,,...,1}) = [n]_q!$. 
In particular, $Q(\bigvee)= (1+q)=[2]_q$. One can look at  a proof of the formula by direct induction on $n$, but of course it is a 
very special case of the last formula of Section 1.

The first important result in our theory of $Q(T)$ polynomials is the product formula for trees glued along their roots (wedge or pointed product).
\begin{theorem}\label{Theorem 2.2}
Let $T_1 \vee T_2$ be a wedge product of trees $T_1$ and $T_2$ ({\psfig{figure=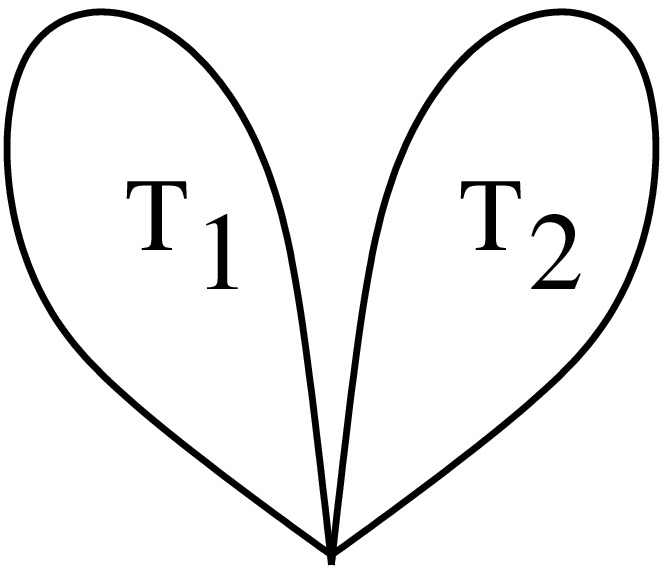,height=1.1cm}}). Then:
$$Q(T_1 \vee T_2)= \binom{|E(T_1)|+|E(T_2)|}{|E(T_1)|}_qQ(T_1)Q(T_2)$$
\end{theorem}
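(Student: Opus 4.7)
The plan is to argue by induction on $n = |E(T_1)| + |E(T_2)|$, using Definition \ref{definition 2.1} together with the $q$-Pascal identity
\[
\binom{a+b}{a}_q = \binom{a+b-1}{a}_q + q^b \binom{a+b-1}{a-1}_q,
\]
which was already verified inside the proof of Proposition \ref{Proposition 1.1}. The base case is immediate: if $T_1$ is a single vertex, then $T_1 \vee T_2 = T_2$ and the asserted right-hand side collapses to $\binom{b}{0}_q \cdot 1 \cdot Q(T_2) = Q(T_2)$; the argument is symmetric when $T_2$ is trivial.

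For the inductive step, set $a = |E(T_1)|$, $b = |E(T_2)|$, and assume $a, b \geq 1$. Take the plane convention that $T_1$ sits to the left of $T_2$ in the wedge. The key geometric observation is how the statistic $r(\cdot, v)$ behaves under the wedge: if $v$ is a leaf of $T_1$, every edge of $T_2$ lies to the right of the path from $v$ to the root, so $r(T_1 \vee T_2, v) = r(T_1, v) + b$; if $v$ is a leaf of $T_2$, no edge of $T_1$ is to the right of that path, so $r(T_1 \vee T_2, v) = r(T_2, v)$. Deleting such a $v$ produces $(T_1 - v) \vee T_2$ or $T_1 \vee (T_2 - v)$ respectively, each still a wedge with edge total $a+b-1$, so the inductive hypothesis applies.

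Expanding $Q(T_1 \vee T_2)$ via Definition \ref{definition 2.1}(ii), splitting the sum into the two families of leaves, and applying induction inside each resulting $Q$, one obtains
\[
Q(T_1 \vee T_2) = Q(T_1)\, Q(T_2) \left[ q^b \binom{a+b-1}{a-1}_q + \binom{a+b-1}{a}_q \right],
\]
after pulling the (now constant) $q$-binomial factors out of each sum and recognizing the leftover sums $\sum_v q^{r(T_i, v)} Q(T_i - v)$ as $Q(T_i)$ by the very definition of $Q$. The $q$-Pascal identity above then collapses the bracket to $\binom{a+b}{a}_q$, closing the induction.

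The main obstacle is purely bookkeeping: one must verify that the wedge structure persists through partial pluckings (so that the inductive hypothesis really applies to $(T_1 - v) \vee T_2$ and $T_1 \vee (T_2 - v)$) and that $r$ transforms exactly as claimed at every partial stage. A conceptually attractive alternative would be to expand $Q$ as a generating function over complete plucking sequences of $T_1 \vee T_2$: such a sequence factors uniquely as a plucking $\sigma_L$ of $T_1$, a plucking $\sigma_R$ of $T_2$, and an interleaving pattern $I$; the total weight splits as $w(\sigma_L) + w(\sigma_R) + \mathrm{inv}(I)$, where $\mathrm{inv}(I)$ counts pairs in which an $L$-leaf precedes an $R$-leaf, and the interleaving factor $\sum_I q^{\mathrm{inv}(I)}$ evaluates to $\binom{a+b}{a}_q$. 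The inductive approach above, however, is the most direct given the tools already developed in the paper.
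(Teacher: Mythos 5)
Your proposal is correct and follows essentially the same route as the paper: induction on the total number of edges, splitting the leaf sum of Definition \ref{definition 2.1} according to whether the leaf lies in $T_1$ or $T_2$, using $r(T_1\vee T_2,v)=r(T_1,v)+|E(T_2)|$ versus $r(T_1\vee T_2,v)=r(T_2,v)$, and closing with the $q$-Pascal identity from the proof of Proposition \ref{Proposition 1.1}. The bijective interleaving argument you sketch at the end is a nice aside, but the inductive argument you carry out is exactly the paper's.
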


\begin{proof}
 We proceed by induction on the number of edges of $T$, $|E(T)|$, with the obvious initial case of no edges in one of the trees, that is
 $|E(T_1)|=0$ or $|E(T_2)|=0$. For simplicity we write $E_i$ for $|E(T_i)|$.\\
Let $T$ be a rooted plane tree with $E_1E_2\neq 0$, then we have:
$$Q(T)= \sum_{v\in L(T)}  q^{r(T,v)}Q(T-v)= $$
$$\sum_{v\in L(T_1)}  q^{r(T_1,v)+E_2 }Q(T_1-v)\vee T_2)+
\sum_{v\in L(T_2)}  q^{r(T_2,v)}Q(T_1\vee (T_2-v))  \stackrel{inductive\  assumption}{=}$$
$$\sum_{v\in L(T_1)}  q^{r(T_1,v)+E_2 }{E_1+E_2-1 \choose E_1-1,E_2}_q Q(T_1-v)Q(T_2) + $$
$$\sum_{v\in L(T_2)}  q^{r(T_2,v)}{E_1+E_2-1 \choose E_1,E_2-1}_qQ(T_1)Q(T_2-v) =$$
$$ Q(T_2)q^{E_2}{E_1+E_2-1\choose E_1-1,E_2}_q\sum_{v\in L(T_1)}  q^{r(T_1,v)}Q(T_1-v) +$$
$$ Q(T_1){E_1+E_2-1 \choose E_1,E_2-1}_q\sum_{v\in L(T_2)}  q^{r(T_2,v)}Q(T_2-v) \stackrel{definition}{=}$$
$$Q(T_1) Q(T_2)(q^{E_2}{E_1+E_2-1 \choose E_1-1,E_2}_q + 
{E_1+E_2-1\choose E_1,E_2-1}_q)=$$
$$ Q(T_1) Q(T_2){E_1+E_2\choose E_1,E_2}_q \mbox{ as needed}\footnote{One can modify the polynomial $Q(T)$ so that the formula of 
Theorem \ref{Theorem 2.2} can be interpreted as a homomorphism. For this we take $Q'(T)=\frac{Q(T)}{[|E(T)|]_q!}$. With this 
definition we have $Q'(T_1\vee T_2)= Q'(T_1)Q'(T_2)$. A disadvantage of such an approach is that $Q'(T)$ is not ncessarily 
a polynomial but only a rational function.}.$$
\end{proof}

Directly from Theorem \ref{Theorem 2.2} we conclude several properties of the $q$-polynomial, $Q(T)$, which by the nature of its definition, as 
pointed up before, we propose to be called the {\it plucking polynomial}. 
We should stress, in particular, part (iii) which establishes the independence of the polynomial of its plane embedding.

\begin{corollary}\label{Corollary 2.3}
\begin{enumerate}
\item[(i)] Let a plane tree be a wedge product of $k$ trees (\psfig{figure=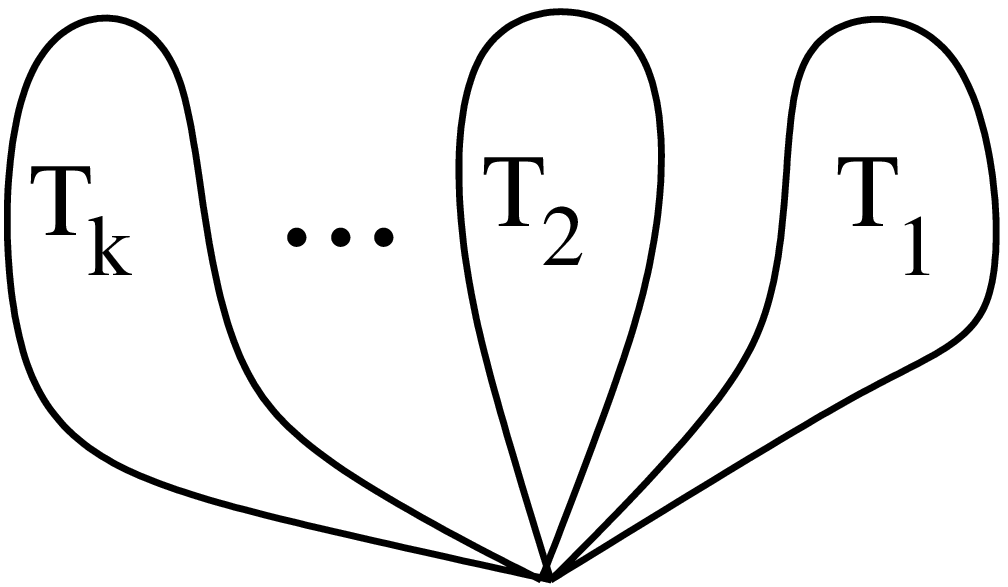,height=1.5cm}) that is 
$$T=T_{k} \vee ... \vee  T_2 \vee T_1,  \mbox{ then }$$
$$Q(T)= \binom{E_k+E_{k-1}+...+E_1}{E_k,E_{k-1},...,E_1}_qQ(T_k)Q(T_{k-1})\cdots Q(T_1),$$
where $E_i=|E(T_i)|$ is the number of edges in $T_i$.
\item[(ii)] (State product formula)
$$ Q(T) = \prod_{v\in V(T)}W(v), $$
where $W(v)$ is a weight of a vertex (we can call it a Boltzmann weight) defined by:
$$W(v)= \binom{E(T^v)}{E(T^v_{k_v}),...,E(T^v_{1})}_q,$$
where $T^v$ is a subtree of $T$ with root $v$ (part of $T$ above $v$, in other words $T^v$ grows from $v$)
and $T^v$ may be decomposed into wedge of trees as follows: $T^v= T^v_{k_v} \vee ... \vee  T^v_2 \vee T^v_{1}.$
\item[(iii)](Independence from a plane embedding) Plucking polynomial, $Q(T)$ does not depend on a plane embedding, it is therefore an 
invariant of rooted trees.
\item[(iv)] (Change of root). Let $e$ be be an edge of a tree $T$ with the endpoints  $v_1$ and $v_2$. Denote by $E_1$ 
 the number of edges of the tree $T_1$ with the root $v_1$ and $E_2$  the number of edges of the tree $T_2$ with the root $v_2$, where 
$$\mbox{ $T=$\psfig{figure=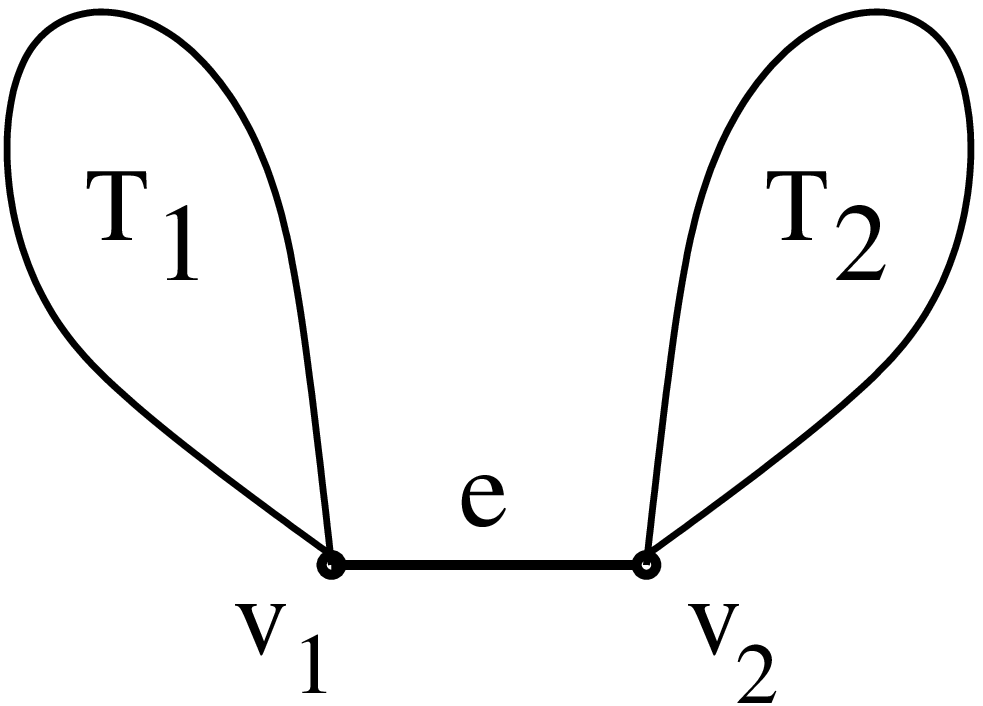,height=1.6cm}.  Then }
Q(T,v_1)=\frac{[E_1+1]_q}{[E_2+1]_q}Q(T,v_2).$$
\end{enumerate}
\end{corollary}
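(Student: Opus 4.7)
The plan is to derive parts (i) and (ii) as iterated applications of Theorem \ref{Theorem 2.2}, and then to read off (iii) and (iv) from the state-product formula in (ii).

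For (i), I would induct on $k$. The base case $k=2$ is exactly Theorem \ref{Theorem 2.2}. For the step, regroup $T=T_k\vee(T_{k-1}\vee\cdots\vee T_1)$, apply Theorem \ref{Theorem 2.2} to the outer wedge, and invoke the inductive hypothesis on the inner wedge of $k-1$ trees. The two $q$-coefficients collapse into the $k$-term $q$-multinomial via the routine factorial identity
$$\binom{E_k+\cdots+E_1}{E_k}_q\binom{E_{k-1}+\cdots+E_1}{E_{k-1},\ldots,E_1}_q=\binom{E_k+\cdots+E_1}{E_k,\ldots,E_1}_q.$$

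For (ii), I would induct on $|V(T)|$. The base case is a single vertex, where both sides are $1$. For the step, let $v_0$ be the root and apply (i) to its wedge decomposition $T=T^{v_0}_{k_{v_0}}\vee\cdots\vee T^{v_0}_1$. This gives $Q(T)=W(v_0)\prod_i Q(T^{v_0}_i)$, and a short separate observation---provable by one more induction on the plucking recursion---says that removing a degree-one root does not change $Q$, so $Q(T^{v_0}_i)$ coincides with $Q$ of the subtree rooted at the child $c_i$, which has strictly fewer vertices than $T$. The inductive hypothesis then turns each $Q(T^{v_0}_i)$ into the state product over the vertices of the subtree at $c_i$, and assembling these with the factor $W(v_0)$ produces the full state product over $V(T)$.

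Part (iii) is an immediate consequence of (ii): the Boltzmann weight $\binom{E(T^v)}{E(T^v_{k_v}),\ldots,E(T^v_1)}_q=[E(T^v)]_q!/\prod_i[E(T^v_i)]_q!$ is symmetric in its lower slots, so permuting the planar order of children at any vertex---which is precisely what distinguishes plane embeddings of the same rooted tree---leaves every $W(v)$, and hence the whole product, unchanged.

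Part (iv) is where the main bookkeeping happens, and I expect the main obstacle to be simply keeping the indices straight. Using (ii), the state products for $Q(T,v_1)$ and $Q(T,v_2)$ agree at every vertex outside $\{v_1,v_2\}$, so it is enough to compare four weights. Let $U_1,\ldots,U_p$ be the subtrees above $v_1$ in $T_1$ and $V_1,\ldots,V_m$ those above $v_2$ in $T_2$; set $N=E_1+E_2+1$. When $T$ is rooted at $v_1$, the branch containing $v_2$ contributes an additional slot of weight $E_2+1$ in the multinomial at $v_1$, while $v_2$ still sees only $T_2$ above it; when $T$ is rooted at $v_2$ the symmetric statement holds. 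Taking the ratio, the common factors $[N]_q!$, $\prod_i[|E(U_i)|]_q!$ and $\prod_j[|E(V_j)|]_q!$ all cancel, and one is left with
$$\frac{Q(T,v_1)}{Q(T,v_2)}=\frac{[E_2]_q!\,[E_1+1]_q!}{[E_2+1]_q!\,[E_1]_q!}=\frac{[E_1+1]_q}{[E_2+1]_q},$$
as asserted.
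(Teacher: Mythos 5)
Your proposal is correct and follows essentially the same route as the paper: (i) by iterating Theorem~\ref{Theorem 2.2} together with the factorization of the $q$-multinomial into $q$-binomials, (ii) by iterating (i), (iii) by observing that the Boltzmann weights are symmetric in their lower slots, and (iv) by comparing the two product expansions rooted at $v_1$ and $v_2$ and cancelling the common factors. The only place you are more careful than the paper is in isolating the lemma that deleting a degree-one root does not change $Q$ --- a fact the paper uses tacitly both when iterating (i) and when writing $Q(T,v_i)$ as a $q$-binomial multiple of $Q(T_1)Q(T_2)$ --- and this is a worthwhile addition rather than a deviation.
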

\begin{proof}
(i) The formula (i) follows by using $(k-1)$-times the product formula of Theorem 2.2 and the fact that the $q$-multinomial coefficient is a product of 
binomial coefficients:
$$\binom{a_k+a_{k-1}+...+a_2+a_1}{a_k,a_{k-1},...,a_2,a_1}_q \stackrel{def.}{=} 
\frac{[a_k+a_{k-1}+...+a_2+a_1]_q!}{[a_k]_q![a_{k-1}]_q!...[a_2]_q![a_1]_q!}=$$
$$\frac{[a_2+a_1]_q!}{[a_2]_q! [a_1]_q!}\cdot\frac{[a_3+a_2+a_1]_q!}{[a_3]_q! [a_2+a_1]_q!}\cdot ... \cdot
\frac{[a_k+a_{k-1}+...+a_2+a_1]_q!}{[a_k]_q! [a_{k-1}+...+a_2+a_1]_q!} =$$
$$= \binom{a_2+a_1}{a_2,a_1}_q\binom{a_3+a_2 + a_1}{a_3,a_2+a_1}_q\binom{a_4+a_3+a_2 + a_1}{a_4,a_3+a_2+a_1}_q...
\binom{a_k+a_{k-1}+...+a_2+a_1}{a_k,a_{k-1}+...+a_2+a_1}_q$$
(ii) The formula (ii) follows by using (i) several times.\\
(iii) Independence of embedding follows from the fact that the state product formula (ii) does not depend on the embedding.\\
(iv) We compare product formulas of Theorem  2.2 for $v_1$ and $v_2$ and we get:
$$Q(T,v_1)= {E_1+E_2+1 \choose E_1, E_2+1}_q Q(T_1)Q(T_2) \mbox{ and } $$ 
$$Q(T,v_2)= {E_1+E_2+1 \choose E_1+1, E_2}_q Q(T_1)Q(T_2),$$
from which the formula of (iv) follows directly.

\end{proof}

There are other nice properties of $Q(T)$ (e.g. the observation that it has polynomial time complexity) and I am sure readers will discover 
more. Here we give a few properties which have some importance in knot theory. 
\begin{corollary}\label{Corollary 2.4}
\begin{enumerate}
\item[(1)] $Q(T)$ is of the form $c_0+c_1q+...+c_Nq^N$ where:\\
(i) $c_0=1=c_N$, $c_i>0$ for every $i\leq N$, \\
(ii) $c_i=c_{N-i}$, that is  $Q(T)$ is a palindromic polynomial (often, less precisely we say symmetric polynomial),\\
(iii) the sequence $c_0,c_1,...,c_N$ is unimodal, that is for some $j$:
$$c_0\leq c_1 \leq ... \leq c_j \geq c_{j+1} \geq \ldots \geq c_N$$
(in our case $ j= \lfloor\frac{N}{2}\rfloor$ or $\lceil\frac{N}{2}\rceil$).\\
(iv) For a nontrivial tree $T$, that is a tree with at least one edge, we have:
$$c_1 = \sum_{v\in V(T)} (k_v-1),$$ where $k_v= deg_{T^v}(v)$ is the number of edges growing up from $v$, that is the degree of 
$v$ in the tree $T^v$ growing from $v$ (as in Corollary \ref{Corollary 2.3} (ii)). 
In particular, if $T$ is a binary tree, $c_1(T)$ is the number of vertices of $T$ which are not leaves.
\item[(2)] \begin{enumerate}
\item[(i)] $Q(T)$ is a product of  $q$-binomial coefficients (of type ${a+b\choose a}_q$).
\item[(ii)] $Q(T)$ is a product of cyclotomic polynomials\footnote{Recall that $n$th cyclotomic polynomial is a minimal polynomial 
which has as a root $e^{2\pi i/n}$. We can write this polynomial as: $\Psi_n(q)=\prod_{\omega^n=1,\omega^k\neq 1, k<n}(q-\omega)$.
For example $\Psi_4(q)=1+q^2$, $\Psi_6(q)=1-q+q^2$.}. 
\end{enumerate}
\item[(3)] The degree of the polynomial $N=deg Q(T)$ can be described by the formula:
$$N=deg Q(T)=\sum_{v\in V(T)}(\sum_{1\leq i < j \leq k_v}E_i^vE_j^v), $$
where, as in Corollary \ref{Corollary 2.3}(ii)  
$T^v$ is a subtree of $T$ with the root $v$ (part of $T$ above $v$, in other words $T^v$ is growing from $v$)
and $T^v$ can be presented as a bouquet of trees: $T^v= T^v_{k_v} \vee ... \vee  T^v_2 \vee T^v_{1}.$
\end{enumerate}
\end{corollary}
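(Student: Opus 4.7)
The plan is to reduce every part of the corollary to facts about a single Gaussian binomial $\binom{a+b}{a}_q$, using the state product formula of Corollary \ref{Corollary 2.3}(ii). That formula gives $Q(T) = \prod_{v \in V(T)} W(v)$ with $W(v)$ a $q$-multinomial coefficient, and the telescoping identity already displayed in the proof of Corollary \ref{Corollary 2.3}(i) rewrites each multinomial as a product of ordinary $q$-binomials $\binom{a+b}{a}_q$. This is already part (2)(i). For (2)(ii) I would use the factorization $[n]_q = \prod_{d\mid n,\, d>1}\Psi_d(q)$ to express each $\binom{a+b}{a}_q = [a+b]_q!/([a]_q![b]_q!)$ as a quotient of products of cyclotomic polynomials; because the quotient is known to be an integer polynomial, every $\Psi_d$ in the denominator must cancel against one in the numerator, and what remains is a product of cyclotomic polynomials.

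For part (3), recall that $\deg\binom{n}{a_1,\ldots,a_k}_q = \binom{n}{2} - \sum_i\binom{a_i}{2} = \sum_{i<j} a_i a_j$; hence $\deg Q(T) = \sum_v \deg W(v) = \sum_v \sum_{1\leq i<j\leq k_v} E_i^v E_j^v$. Parts (1)(i) and (1)(ii) are inherited from standard properties of the $q$-binomials: $\binom{a+b}{a}_q$ is the generating function by area for partitions fitting in an $a\times b$ box, which makes it transparent that the constant and leading coefficients are $1$, every intermediate coefficient is strictly positive, and the polynomial is palindromic (conjugate a partition inside its box). All three properties are closed under multiplication of polynomials, so they propagate to $Q(T) = \prod_v W(v)$ once each $W(v)$ is expanded via (2)(i).

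For (1)(iv) I would expand $W(v) = 1 + (k_v-1)q + O(q^2)$ whenever $v$ has at least one child. The coefficient of $q$ in $\binom{n}{a_1,\ldots,a_k}_q$ with every $a_i \geq 1$ equals $k-1$, because the multiset words of content $(a_1,\ldots,a_k)$ with exactly one inversion correspond bijectively to the $k-1$ adjacent-type boundaries in the sorted word (one admissible swap at each boundary). Multiplying these low-order expansions over all $v$ gives $c_1 = \sum_{v:\, k_v \geq 1}(k_v - 1)$, which for a binary tree collapses to the count of non-leaf vertices.

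The genuinely deep point is unimodality (1)(iii). I would invoke two classical results. First, each $\binom{a+b}{a}_q$ is a symmetric unimodal polynomial with nonnegative integer coefficients (Sylvester's theorem, later reproved by Proctor via the Lefschetz action of $\mathfrak{sl}_2$ and by O'Hara via an explicit combinatorial injection). Second, Stanley's theorem that the product of two symmetric unimodal polynomials with nonnegative coefficients is again symmetric unimodal, proved via tensor products of finite-dimensional $\mathfrak{sl}_2$-representations. Iterating the second result over the factors of the state product yields unimodality of $Q(T)$. I expect this to be the main obstacle: every other part of the corollary is essentially formal bookkeeping once (2)(i) is in hand, whereas unimodality of a product demands an external, non-elementary input, and any self-contained combinatorial attempt would effectively amount to placing an explicit $\mathfrak{sl}_2$-module structure on the pluckings of $T$.
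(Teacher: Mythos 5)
Your proposal is correct and follows essentially the same route as the paper: everything is reduced to the state product formula of Corollary \ref{Corollary 2.3}(ii) together with standard facts about Gaussian binomials, with the one non-elementary input being Sylvester's unimodality theorem combined with closure of symmetric positive unimodal polynomials under products. The only (harmless) divergences are that you derive (1)(i)--(ii) from the partitions-in-a-box model of $\binom{a+b}{a}_q$ where the paper argues directly from the plucking recursion and the identity $\binom{a+b}{a,b}_{q^{-1}} = q^{-ab}\binom{a+b}{a,b}_q$, and that you correctly restrict the sum in (1)(iv) to vertices with $k_v\geq 1$, which is the intended reading of the paper's formula.
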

\begin{proof} 1(i) follows easily from the definition of the plucking polynomial. Namely we see that the constant term is obtained in a unique way 
by always plucking the most rightmost leaf from the tree (repeating this each time in the calculation). Thus $c_0=1$. 
Similarly, the highest power of $q$ is obtained uniquely by always plucking the leftmost leaf of the tree.\\
The condition that $c_i>0$ for any $i\leq N$ requires a more careful look at the recursive computation of $Q(T)$ and the proof is absolutely 
elementary; we leave it to the reader because in (iii) we provide much a stronger condition (but using a nontrivial fact proven by Sylvester).\\
(1)(ii) We start from observing the symmetry of $q$-binomial coefficients; namely we have: 
$$ {a+b \choose a,b}_{q^{-1}} = q^{-ab} {a+b \choose a,b}_q.$$
Then we use an easy observation that a product of symmetric polynomials is symmetric, and the fact we proved already and formulated 
in (2) that the polynomial $Q(T)$ is always a product of binomial coefficients.\\
(1)(iii) follows from the nontrivial fact, originally proved by Sylvester that $q$-binomial coefficients are
unimodal and from a simple observation that a product of symmetric (i.e. palindromic) positive unimodal polynomials is symmetric unimodal.
 (see \cite{Sta,Win}).\\
(1)(iv) The formula for $c_1$ follows from the observation that ${a+b \choose a, b}_q= 1+q+...$ for $a,b >0$ and more generally,
${a_1+a_2+...+a_k \choose a_1,a_2,...,a_k}_q = 1+ (k-1)q + ...$ for $a_1,a_2,...,a_k >0$.. Then the conclusion follows from the product formula 
of Corollary \ref{Corollary 2.3}.\\
(2) and (3): These conditions follow directly from the product formula of Corollary \ref{Corollary 2.3}(ii).
\end{proof}
With regard to Corollary \ref{Corollary 2.4}(iii) we can ask: for which trees, is $Q(T)$ strictly unimodal, that is 
$$c_0 < c_1 < ... < c_j > c_{j+1} > \ldots > c_N$$ for some $j$; compare  the computation for the tree of Figure 2.1 (see also \cite{Pak-Pan}).

\section{Comments and Connections}
As I was stressing from the beginning, and by now many readers may agree, the $q$-polynomial (plucking polynomial) is interesting on its own.
However, I would have never constructed or discovered it if I had not observed its shadow in my knot theory research. 
Concretely it was my work with M.Dabkowski and his student C.Li concerning skein modules of generalized (lattice) crossings, Figure 3.1, 
which gave some initial motivation, \cite{DLP}. In that paper we do not use $Q(T)$, as it was observed after the paper was completed. We will 
use it, however, in future research \cite{D-P}. Knot theory also motivates specific generalizations of the plucking polynomial by 
enhancing it with a delay function, $f: L(V) \to {\mathcal N}=\{n\in \Z\ | \ n \geq 1\}$. This function regulates which leaves are used in 
the recursive formula (Definition 2.1) and which are ``delayed". In fact we have much flexibility in the definition so we challenge a reader 
to play with possibilities.

The relation of the plucking polynomial to the Kauffman bracket skein modules is precise but difficult to describe succinctly. 
To have a concrete idea we can say that it concerns the study of the generalized (lattice) crossing (Figure 3.1), under the 
assumption that we resolve every crossing using the Kauffman bracket skein relation\\
 \parbox{5.3cm}{\psfig{figure=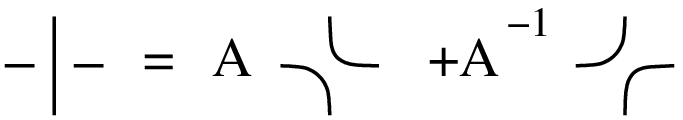,height=0.9cm}}, 
and replace every trivial component by the Laurent polynomial $-A^2 - A^{-2}$.

\centerline{\psfig{figure=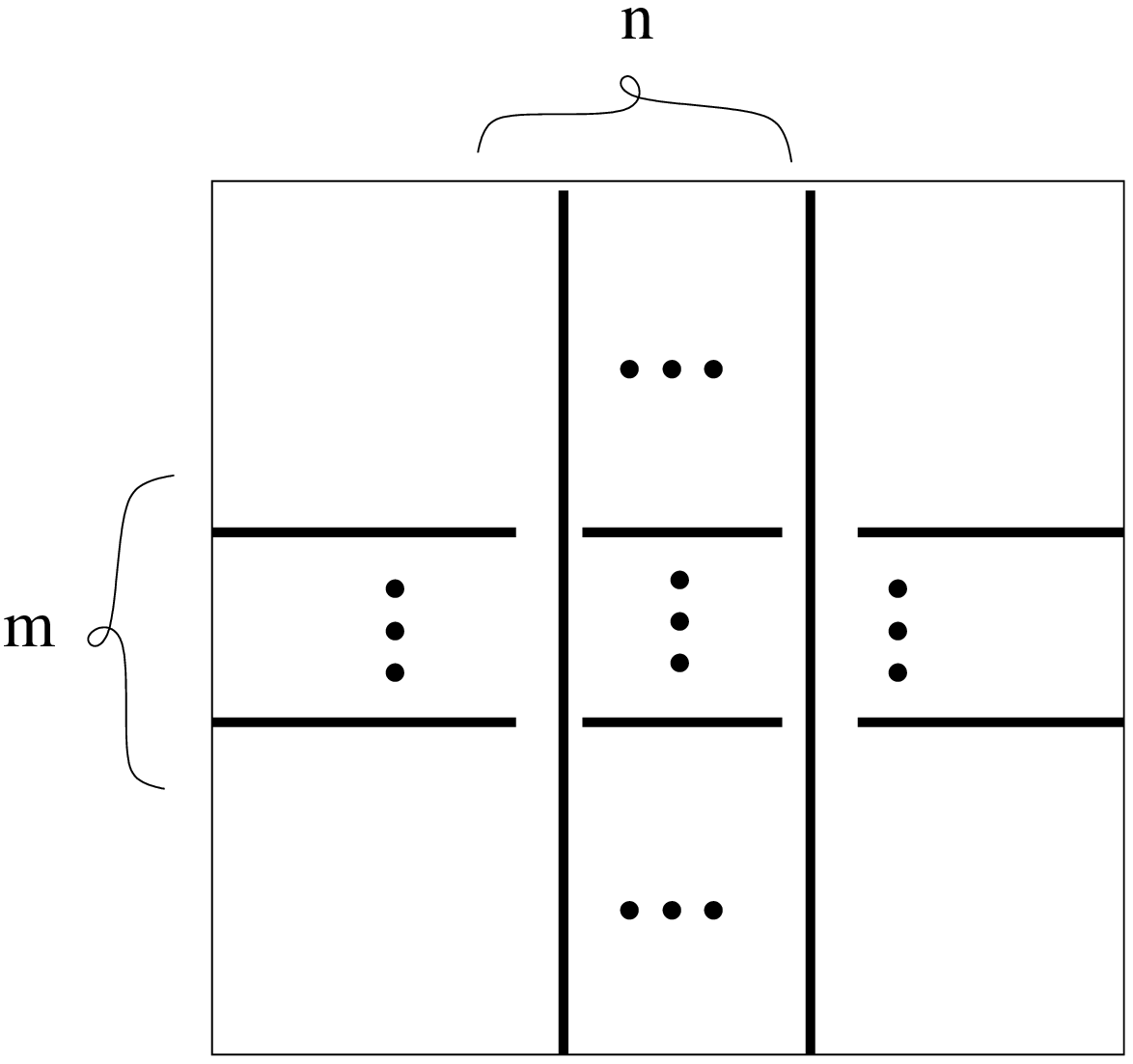,height=3.7cm}}\ \\
\centerline{ Figure 3.1; $T_{m\times n}$: $m\times n$ lattice crossing}
\ \\ \

One can generalize the plucking polynomial to any graph. If a graph $G$ has a base-point $b$ then the invariant $Q(G,b)$ is a 
collection (multiset) of plucking polynomials of spanning trees $G$ with the root $b$ \cite{Prz}. I challenge the reader to study 
connections of this invariant with known invariants of graphs.

Our polynomial also has relations to homological algebra:\\
let $\mathcal C$ be a chain complex, that is a sequence of abelian groups, $C_n$, and homomorphisms $\partial_n:C_n \to C_{n-1}$, so 
that $\partial_{n-1} \partial_n = 0$. On the basis of a chain complex we build  homology groups by defining 
$H_n(\mathcal C)= \ker \partial_n/im(\partial_{n+1})$.  Very often in topology and homological algebra the boundary operation $\partial_n$ is an 
alternating sum of homomorphisms, called face maps:
$\partial_n= \sum_{i=0}^n (-1)^id_i$. M.~Kapranov asked about what happens if  $(-1)^i$ is replaced by $q^n$, that is we define
 $$\partial^q_n= \sum_{i=0}^n q^id_i.$$ 
He noticed that if $q$ is a $k$th root of unity different from $1$ (i.e. $q^k=1, q\neq 1$) then the $k$th iteration of $\partial^q$ is the zero map
 ($\partial^q_{n-k+1}...\partial^q_{n-1} \partial^q_n = 0$), \cite{Kapr}.
The fact that Kapranov's idea is related to our $q$-polynomial is clear, however deep connections require  careful study: possibly you -- the reader -- 
could make a breakthrough.

\section{Acknowledgements}
J.~H.~Przytycki was partially supported by 
 the GWU REF grant, and Simons Collaboration Grant-316446.


\ \\ \ \\
Department of Mathematics,\\
The George Washington University,\\
Washington, DC 20052\\
e-mail: {\tt przytyck@gwu.edu},\\
University of Maryland College Park,\\
and University of Gda\'nsk

\newpage
{\Large \bf A free translation of the paper in Polish}
\ \\
\ \\
\ \\
\title{\bf P{\l}aszczyzna kwantowa i $q$-wielomian drzew z korzeniem}\\
\author{J\'ozef H. Przytycki}
\ \\ \ \\

\thispagestyle{empty}

\maketitle
\markboth{\hfil{\sc Wielomian drzew z korzeniem}\hfil}
\ \

{\bf Streszczenie}
Prezentujemy tutaj nowy niezmiennik drzew z korzeniem. S{\c a}dzimy, \.ze jest on nie tylko ciekawy sam w sobie, ale r\'ownie\.z poprzez
 zwi{\c a}zki z teori{\c a} w{\c e}z{\l}ow i algebr{\c a} homologiczn{\c a}. Jednak prawdziwym powodem prezentacji tego wielomianu czytelnikom
 Macierzatora jest to, \.ze mamy tu do czynienia z ca{\l}kowicie elementarn{\c a} niemniej zupe{\l}nie now{\c a} interesujac{\c a} matematyk{\c a}.
Po przeczytaniu tego eseju czytelnicy mog{\c a} bra\'c udzia{\l} w rozwoju niezmiennika, zastosowaniach i zwi{\c a}zkach z innymi dzia{\l}ami
matematyki czy te\.z mechaniki statystycznej lub kombinatorycznej biologii.

\tableofcontents

\setcounter{section}{0}
\section{Wst{\c e}p}\label{Section 1.1}

Nieprzemienna p{\l}aszczyzna zwana tak\.ze p{\l}aszczyzn{\c a} kwantow{\c a} rozwa\.zana ju\.z by{\l}a ponad sto lat temu przez
MacMahona\footnote{Percy Alexander MacMahon (1854 -- 1929).}. U\.zywa si{\c e}
jej w wielu dziedzinach matematyki i fizyki. Tutaj poka\.zemy jak naturalnie prowadzi ona do wielomianowego niezmiennika drzew i graf\'ow.

\subsection{Nieprzemienna p{\l}aszczyzna zwana tak\.ze p{\l}aszczyzn{\c a} kwantow{\c a}}\

Wszyscy znamy standardowy wz\'or na dwumian Newtona:
$$(x+y)^n= \sum_{i=0}^n {n \choose i} x^iy^{n-i}$$
Oczywi\'scie zak{\l}adamy tutaj przemienno\'s\'c zmiennych czyli $yx=xy$.
Mo\.zemy wyobrazi\'c sobie dow\'od bez s{\l}\'ow pisz{\c a}c
$$(x+y)^n =(x+y)(x+y)(x+y)...(x+y) \mbox{ $n$ razy} $$
i interpretuj{\c a}c wsp\'o{\l}czynnik przy jednomianie $x^iy^{n-i}$ jako liczb{\c e} wybor\'ow $i$ nawias\'ow z posr\'od $n$ nawias\'ow, z kt\'orych
wybieramy $x$ (z pozostai{\l}ych wybieramy $y)$. Liczb{\c a} t{\c a} jest ${n \choose i}$.

Je\'sli dowodzimy wz\'or dwumianowy przez indukcj{\c e} to wykorzystujemy to\.zsamo\'s\'c ${n \choose i}= {n-1 \choose i-1} + {n-1 \choose i}$,
t{\c e} sam{\c a} to\.zsamo\'s\'c z kt\'or{\c a} zwi{\c a}zany jest tr\'ojk{\c a}t Pascala.

Mo\.zna jednak zada\'c pytanie co si{\c e} stanie gdy przemienno\'s\'c
nieznacznie os{\l}abimy i za{\l}o\.zymy,
\.ze $yx=qxy$ ($q$ jest przemienne z $x$ i $y$). W zastosowaniach z fizyki $q$ jest cz{\c e}sto wybran{\c a}
liczb{\c a} zespolon{\c a} ale dla nas
najlepiej my\'sle\'c og\'olnie, \.ze $q$ jest zmienn{\c a} czyli pracujemy w
pier\'scieniu wielomian\'ow $Z[q]$.

Je\'sli nie znamy wyniku nale\.zy zacz{\c a}c od przyk{\l}ad\'ow:\\
$(x+y)^2= y^2+ xy +yx + x^2= y^2 + (1+q)xy +x^2,$ \\
$(x+y)^3= y^3+ xy^2+ yxy +y^2x + x^2y+ xyx + yx^2 +x^3 =$ \\
$y^3 + (1+q+q^2)xy^2 + (1+q+q^2)x^2y+ y^3,$\\

Widzimy, ze $1+q$ odgrywa rol{\c e} dw\'ojki w standardowym wzorze dwumianowym. Podobnie $1+q+q^2$ odgrywa rol{\c e} tr\'ojki.
Sugeruje to, \.ze rol{\c e} liczby $n$ odgrywa\'c b{\c e}dzie $1+q+q^2+...+q^{n-1}$, szczeg\'olnie, \.ze dla $q=1$ otrzymamy $n$.
Wprowad\'zmy wiec oznaczenie na ten kwantowy odpowiednik liczby $n$: $[n]_q= 1+q+q^2+...+q^{n-1}$\\
Liczmy dalej:
$(x+y)^4= y^4 + xy^3+ yxy^2 +y^2xy + y^3x + x^2y^2 + xyxy + xy^2x + yx^2y + yxyx + y^2x^2 +
          x^3y+ x^2yx + xyx^2 + yx^3 + x^4 =$
$ y^4 +  (1+q+q^2+q^3)xy^3 + (1+ q + 2q^2+ q^3 + q^4)x^2y^2 + (1+q+q^2+q^3)x^3y +x^4 =$
         $ y^4+(1+q+q^2+q^3)xy^3 + (1+q^2)(1+q+q^2)x^2y^2+ (1+q+q^2+q^3)x^3y +x^4 =$
$y^4 + [4]_qxy^3 + (1+q^2)[3]_q x^2y^2 + [4]_qx^3y + y^4$.\\
Tutaj naturaln{\c a}, zaczyna by\'c sugestia, \.ze  nie tylko $1+q+...q^{n-1}$ powinno byc $q$-odpowiednikiem liczby $n$,
ale  tym samym duchu powinnismy zdefiniowa\'c silni{\c e} $q$-liczby $[n]_q$ jako: $[n]_q!=[n]_q[n-1]_q\cdots [2]_q[1]_q$, a
$q$-odpowiednik symbolu Newtona, zwany tak\.ze $q$-wielomianem Gaussa jako $\binom{n}{i}_q = \frac{[n]_q!}{[i]_q! [n-i]_q!}$
(cz{\c e}sto by podkre\'sli\'c symetri{\c e} $q$-wielomianu Gaussa b{\c e}dziemy te\.z u\.zywa\'c
zapisu $\binom{n}{i,n-i}_q$).
Mozemy wtedy zapisa\'c wsp\'o{\l}czynnik przy $x^2y^2$ jako:\\
$(1+q^2)[3]_q= \frac{[4]_q[3]_q}{[2]_q} = \frac{[4]_q!}{[2]_q![2]_q!}= {4 \choose 2}_q$.

W naszej notacji poprzednie rachunki mog{\c a} by\'c zapisane zwi{\c e}\'zle jak poni\.zej:
$$(x+y)^2= y^2 +[2]_qxy +x^2,$$
$$(x+y)^3= y^3 + [3]_qxy^2 + [3]_qx^2y+x^3,$$
$$(x+y)^4= y^4+ [4]_qxy^3 + \frac{[4]_q[3]_q}{[2]_q}x^2y^2 + [4]_qx^3y+ x^4 =$$
$$ y^4+ [4]_qxy^3 + \binom{4}{2}_q x^2y^2 + [4]_qx^3y+ x^4.$$

Mo\.zna teraz zgadn{\c a}\'c og\'oln{\c a} formu{\l}{\c e}:
\begin{stwierdzenie}\label{Stwierdzenie N.5.3}
$$(x+y)^n= \sum_{i=0}^n {n \choose i}_qx^iy^{n-i}.$$
\end{stwierdzenie}
{\bf Dow\'od.} Dow\'od bez s{\l}\'ow zostawiam czytelnikom, mimo \.ze jest podobny do przemiennego przypadku wymaga wi{\c e}kszego skupienia i ja
go widz{\c e} gdy jestem wyspany i po kawie...

Nietrudno zasugerowa\'c jednak dow\'od przez indukcj{\c e} po $n$. Sprawdzili\'smy ju\.z formu{\l}{\c e} dla $n\leq 4$ (by formu{\l}a dzia{\l}a{\l}a
potrzebujemy, jak w klasycznym przypadku, konwencji, \.ze $[0]_q!=1$ i w konsekwencji ${n \choose 0}_q=1 ={n \choose n}_q$.\\
Teraz przeprowadzimy krok indukcyjny (od $n-1$ do $n$):
$$(x+y)^n= (x+y)(x+y)^{n-1}=(x+y)\sum_{i=0}^{n-1} {n-1 \choose i,n-i-1}_qx^iy^{n-i-1}=$$
$$\sum_{i=0}^{n-1} {n-1 \choose i,n-i-1}_qx^{i+1}y^{n-i-1} +\sum_{i=0}^{n-1}q^i{n-1 \choose i,n-i-1}_qx^iy^{n-i} =$$
$$\sum_{i=0}^n({n-1 \choose i-1,n-i}_q +q^i{n-1 \choose i,n-i-1}_q)x^iy^{n-i}.$$
U\.zywamy konwencji, \.ze ${n-1 \choose -1,n}_q=0$. \\
Pozostaje nam sprawdzi\'c, \.ze $${n-1 \choose i-1,n-i}_q +q^i{n-1 \choose i,n-i-1}_q={n \choose i}_q.$$
Zach{\c e}camy czytelnika do sprawdzenia tego samemu bez patrzenia na nastepuj{\c a}ce obliczenie:
\begin{enumerate}
\item[(i)] $[a+b]_q= 1+q+...+q^{a+b-1}= (1+q+...+q^{a-1}) + q^a(1+q+...+q^{b-1})= [a]_q+q^a[b]_q= [b]_q+q^b[a]_q$.
\item[(ii)] $$\binom{a+b}{a,b}_q=\frac{[a+b]_q!}{[a]_q! [b]_q!}=[a+b]_q\frac{[a+b-1]_q!}{[a]_q! [b]_q!}=$$
$$([a]_q+q^a[b]_q)\frac{[a+b-1]_q!}{[a]_q! [b]_q!}= \frac{[a+b-1]_q!}{[a-1]_q! [b]_q!} + q^a \frac{[a+b-1]_q!}{[a]_q! [b-1]_q!}= $$
$$\binom{a+b-1}{a-1,b}_q + a^q\binom{a+b-1}{a,b-1}_q= \binom{a+b-1}{a,b-1}_q + b^q\binom{a+b-1}{a-1,b}_q.$$
\end{enumerate}
\ \\ \ \\

Mo\.zemy interpretowa\'c klasyczn{\c a} formu{\l}e na wsp\'o{\l}czynniki $(x+y)^n$ przez rozwa\.zenie drzewa $T_{b,a}$ o dw\'och d{\l}ugich kraw{\c e}dziach
(lub raczej ga{\l}{\c e}ziach)
o d{\l}ugo\'sci $b$ i $a$ odpowiednio, wychodz{\c a}cych z korzenia, tak jak to jest pokazane na rysunku 1.1.
Mo\.zna zada\'c pytanie na ile sposob\'ow drzewo to da si{\c e} oskuba\'c jak w dziecinnej zabawie ``lubi -- nie lubi",
li\'s\'c po li\'sciu. Za ka\.zdym razem musimy tylko zdecydowa\'c
czy skubiemy lew{\c a} czy praw{\c a} ga{\l}{\c a}\'z; lewa ma by\'c skubana $b$ razy a prawa $a$ razy. Jasne, \.ze musimy wybra\'c
$a$ spo\'sr\'od $a+b$ ruch\'ow tak\.ze
odpowied\'z to ${a+b \choose a}$. Nasza praca opowiada o dramatycznym uog\'olnieniu tego przyk{\l}adu.

\ \\
\centerline{\psfig{figure=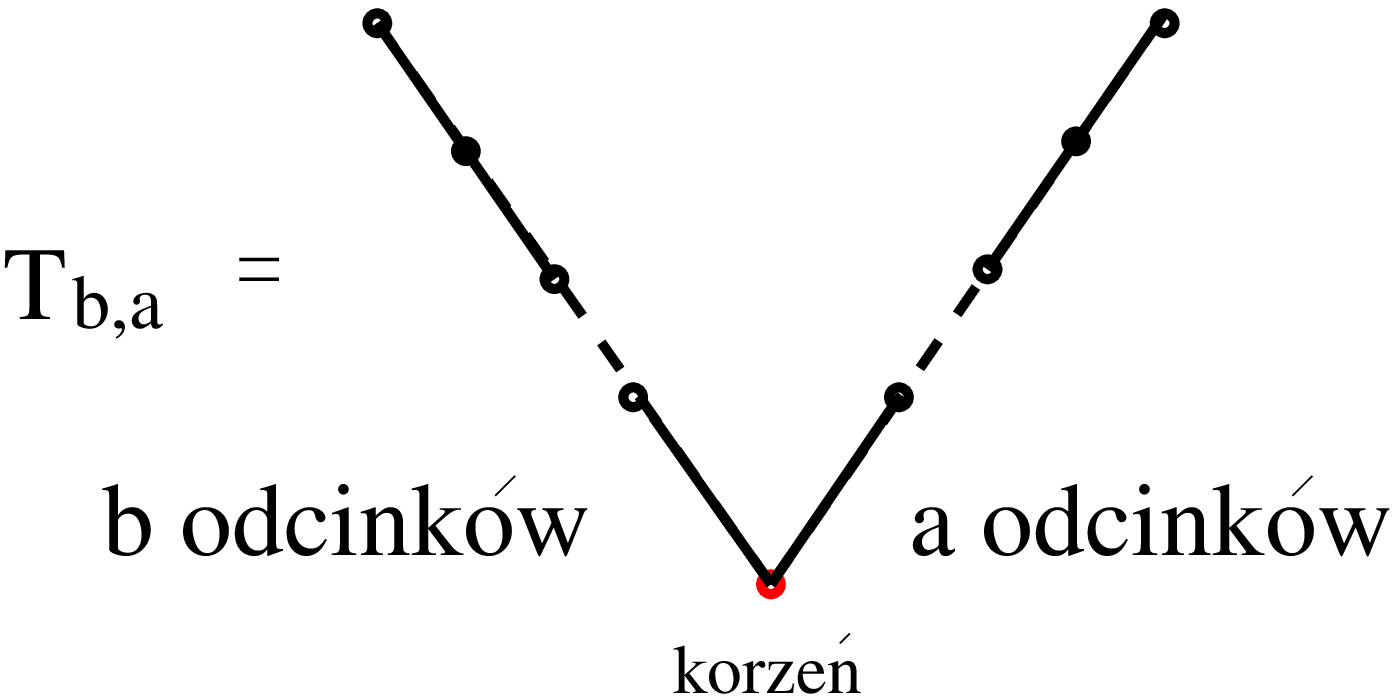,height=3.5cm}}\ \\
\centerline{Rysunek 1.1; drzewo $T_{b,a}$ z ga{\l}{\c e}ziami o d{\l}ugo\'sci  $b$ i $a$}\ \\
\ \\

Wr\'o\'cmy do naszego drzewa o dw\'och ga{\l}{\c e}ziach, i spr\'obujmy gra\'c gr{\c e} skubania w $q$-stylu:
Znaczy\'c to b{\c e}dzie, \.ze ka\.zde skubanie b{\c e}dzie mia{\l}o wag{\c e} $q$ podniesion{\c a} do jakiej\'s pot{\c e}gi.
Dok{\l}adniej, li\'s\'c z prawej strony b{\c e}dzie mia{\l} wag{\c e} $1$ a li\'s\'c z lewej strony wag{\c e} $q^a$ (jako przyczyn{\c e} takiego wyboru podamy
to, \.ze ma on $a$ odcink\'ow (kraw{\c e}dzi) po swej prawej stronie).
Tak wiec jako wynik otzymamy nie liczb{\c e} ``skuba\'n" ale wielomian skuba\'n, kt\'ory oznaczymy przez $Q(T_{b,a})$.
Nasza definicja daje nast{\c e}puj{\c a}c{\c a} rekurencyjn{\c a} relacj{\c e}: $Q(T_{b,a})= Q(T_{b,a-1}) + q^aQ(T_{b-1,a})$.
Zauwai\.zamy tak\.ze, \.ze $Q(T_{0,a})=Q(T_{b,0})=1$. Mo\.zemy teraz rozpozna\'c, \.ze
wielomian skubania dany jest formu{\l}{\c a}:  $Q(T_{b,a})= \binom{a+b}{a,b}_q$.
To jest punkt wyj\'sciowy do og\'olnej definicji $q$-wielomianu drzewa z korzeniem o kt\'orym mowa w tytule tego eseju.

Mo\.zemy powt\'orzyc nasze rozwa\.zania u\.zywaj{\c a}c wielu zmiennych, $x_1,x_2,...,x_k$. Je\'sli zmienne s{\c a} przemienne to
otrzymujemy znan{\c a} nam pewnie wszystkim z rachunku r\'o\.zniczkowego wielu zmiennych wielomianow{\c a} formu{\l}{\c e} Newtona:
$$(x_1+x_2+...x_k)^n= \sum_{a_1,...,a_k; \sum a_i=n}^n {n \choose a_1,...,a_k}x_1^{a_1} x_2^{a_2}\cdots x_k^{a_k},$$
gdzie ${n \choose a_1,...,a_k} = \frac{n!}{a_1!...a_k!}$. Tak jak przedtem mi{\l}{\c a} interpretacj{\c a} tego wyra\.zenia
 ${n \choose a_1,...,a_k}$ jest liczba skuba\'n drzewa $T_{a_k,...,a_2,a_1}$ o $k$  d{\l}ugich ga{\l}{\c e}ziach d{\l}ugo\'sci
 $a_k,...,a_2,a_1$ tak jak to jest pokazane na rysunku 1.2 poni\.zej.

\ \\
\centerline{\psfig{figure=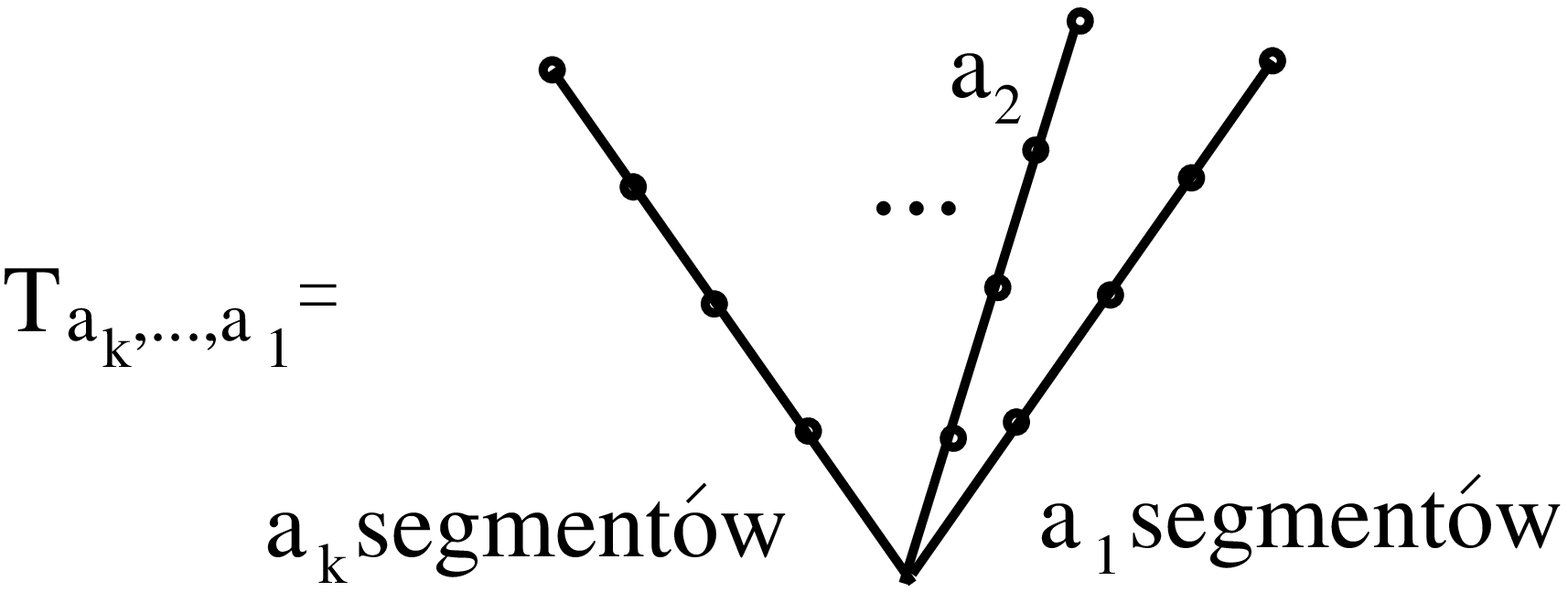,height=4.5cm}}\ \\
\centerline{Rysunek 1.2; drzewo $T_{a_k,...,a_1}$ z ga{\l}{\c e}ziami d{\l}ugo\'sci $a_k,...,a_1$}\ \\

Mo\.zemy teraz rozpatrzy\'c nieprzemienn{\c a} przestrze\'n z $x_jx_i=qx_ix_j$ dla $i<j$.
Otrzymamy teraz formu{\l}{\c e}:
$$(x_1+x_2+...x_k)^n= \sum_{a_1,...,a_k; \sum a_i=n}^n {n \choose a_1,...,a_k}_qx_1^{a_1} x_2^{a_2}\cdots x_k^{a_k},$$
gdzie ${n \choose a_1,...,a_k}_q = \frac{[n]_q!}{[a_1]_q!...[a_k]_q!}$.
\ \\
Aby wykaza\'c t{\c e} formu{\l}{\c e} mo\.zemy, jak przedtem, u\.zy\'c indukcji po $n$ i nastepuj{\c a}ce to\.zsamo\'sci s{\c a} kluczowe w dowodzie:
\begin{enumerate}
\item[(i)] $[a_1+a_2+...+a_k]_q= [a_1]_q + q^{a_1}[a_2]_q + q^{a_1+a_2}[a_2]_q+...+q^{a_1+a_2+...+a_{k-1}}[a_k]_q.$
\item[(ii)] $${a_1+a_2+...+a_k \choose a_1,a_2,...,a_k}_q = 
{a_1+a_2+...+a_k-1 \choose a_1-1,a_2,...,a_k}_q + $$
$$ q^{a_1}{a_1+a_2+...+a_k-1 \choose a_1,a_2-1,...,a_k}_q+...
+ q^{a_1+a_2+...+a_{k-1}} {a_1+a_2+...+a_k-1 \choose a_1,a_2,...,a_k-1}_q.$$
\end{enumerate}

Ponownie mo\.zemy interpretowa\'c $q$-wielomianowe wsp\'o{\l}czynniki poprzez $q$-skubanie drzewa   $T_{a_k,...,a_2,a_1}$,  zak{\l}adaj{\c a}c
przy $q$-skubaniu nast{\c e}puj{\c a}c{\c a} formu{\l}{\c e}:
$$Q(T_{a_k,...,a_2,a_1}) = Q(T_{a_k,...,a_2,a_1-1}) + q^{a_1}Q(T_{a_k,...,a_2-1,a_1}) +$$
$$ q^{a_1+a_2}Q(T_{a_k,...,a_3-1,a_2,a_1}) + ... + q^{a_1+a_2+...+a_{k-1}}Q(T_{a_k-1,...,a_2,a_1}).$$
Poniewa\.z rekurencyjny wz\'or jest taki sam jak dla $q$ wielomianowych symboli Newtona wi{\c e}c
 $$Q(T_{a_k,...,a_2,a_1}) = {a_1+a_2+...+a_k \choose a_1,a_2,...,a_k}_q.$$

\section{Rekurencyjna definicja  $q$-wielomianu p{\l}askiego drzewa z korzeniem}\

Poka\.zemy tutaj jak nasze rozwa\.zania o kwantowej p{\l}aszczyznie i nieprzemiennej przestrzeni mo\.zna uog\'olni\'c
do dowolnego drzewa z korzeniem (tzn. punktem wyr\'o\.znionym czy bazowym). Zaczniemy
od za{\l}o\.zenia, \.ze drzewo jest p{\l}askie (to znaczy zanurzone na p{\l}aszczy\'znie), a p\'ozniej wyka\.zemy, \.ze otrzymany $q$-wielomian skubania
nie zale\.zy od w{\l}o\.zenia. Mo\.zemy w{\c e}c powiedzie\'c, \.ze $q$-wielomian jest niezmiennikiem drzewa z korzeniem.

\ \\ \ \\ \
Tak wi{\c e}c zak{\l}adamy w tej  konstrukcji, \.ze drzewo z korzeniem
 jest zanurzone na p{\l}aszczyznie (p{\l}askie drzewo). W naszej konwencji drzewo ro\'snie do g\'ory.
\begin{definition}\label{Definicja N.9.2}
Niech $T$ b{\c e}dzie p{\l}askim drzewem z korzeniem $v_0$, wtedy wieloman $Q_{T,v_0}(q)$ lub kr\'otko $Q(T)\in \Z[q]$ jest zdefiniowany
przez warunek pocz{\c a}tkowy $Q(\bullet)=1$ i relacj{\c e} rekurencyjn{\c a}
$$Q(T)= \sum_{v\in L(T)}q^{r(T,v)}Q(T-v), \mbox{ gdzie $L(T)$ jest zbiorem li\'sci drzewa $T$, }$$
to znaczy wierzcho{\l}k\'ow stopnia $1$ r\'o\.znych od korzenia,
a  $r(T,v)$ jest liczb{\c a} kraw{\c e}dzi $T$ na prawo od jedynej drogi {\l}{\c a}cz{\c a}cej $v$ z korzeniem $v_0$; por\'ownaj rysunek 2.1.
\end{definition}

\centerline{\psfig{figure=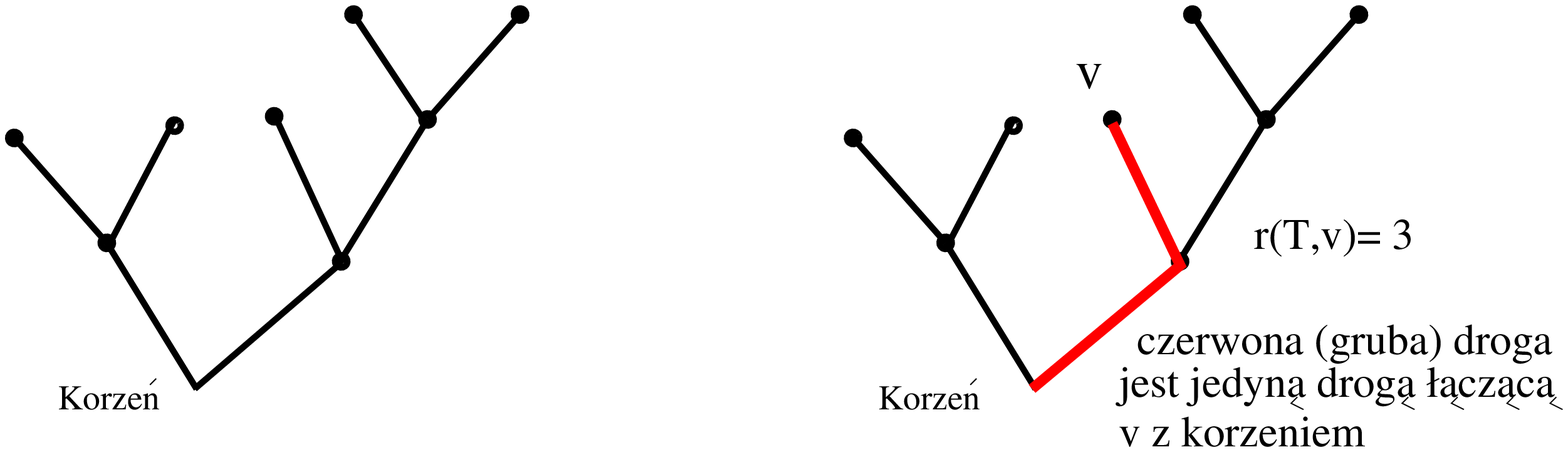,height=2.7cm}}
 \ \\
\centerline{ Rysunek 2.1; p{\l}askie drzewo z korzeniem i przyk{\l}ad liczenia wyk{\l}adnika $r(T,v)$}
\ \\ \ \\

Dla przyk{\l}adu policzyli\'smy ju\.z, \.ze $Q(\bigvee)= (1+q)=[2]_q$ lub og\'olniej $Q(T_n)= [n]_q!$,
gdzie $T_n$ jest gwiazd{\c a} z  $n$ promieniami,  $[n]_q=1+q+...+q^{n-1}$
a $q$-silnia zdefiniowana jest analogicznie jak zwyk{\l}a silnia:
  $[n]_q!= [n]_q[n-1]_q\cdots [2]_q[1]_q$ (czyli notacja jakiej u\.zywali\'smy w przypadku kwantowej powierzchni).

\begin{twierdzenie}\label{Twierdzenie 2.2}
Niech $T_1 \vee T_2$ b{\c e}dzie bukietem  (produktem sklejajacym korzenie))
({\psfig{figure=Wedge-product.eps,height=1.1cm}}). Wtedy:
$$Q(T_1 \vee T_2)= \binom{|E(T_1)|+|E(T_2)|}{|E(T_1)|}_qQ(T_1)(Q(T_2)$$
\end{twierdzenie}
{\bf Dow\'od.}
Dow\'od przeprowadzamy przez indukcj{\c e} po ilo\'sci kraw{\c e}dzi , $|E(T)|$, drzewa $T=T_1\vee T_2$ z naturalnym warunkiem pocz{\c a}tkowym
gdy jedno z drzew nie ma kraw{\c e}dzi ($|E(T_1)||E(T_2)|=0$) a formu{\l}a zachodzi w spos\'ob oczywisty.
Dla prostoty piszemy $E_i$ zamiast $|E(T_i)|$.\\
Niech $T$ b{\c e}dzie p{\l}askim drzewem z korzeniem i  $E_1E_2> 0$. Wtedy otrzymujemy:
$$Q(T)= \sum_{v\in L(T)}  q^{r(T,v)}Q(T-v)= $$
$$\sum_{v\in L(T_1)}  q^{r(T_1,v)+E_2 }Q((T_1-v)\vee T_2)+
\sum_{v\in L(T_2)}  q^{r(T_2,v)}Q(T_1\vee (T_2-v))  \stackrel{\mbox{za{\l}o\.zenie\ indukcyjne}}{=}$$
$$\sum_{v\in L(T_1)}  q^{r(T_1,v)+E_2}{E_1+E_2-1 \choose E_1-1,E_2}_q Q(T_1-v)Q(T_2) + $$
$$\sum_{v\in L(T_2)}  q^{r(T_2,v)}{E_1+E_2-1 \choose E_1,E_2-1}_qQ(T_1)Q(T_2-v) =$$
$$ Q(T_2)q^{E_2}{E_1+E_2-1\choose E_1-1,E_2}_q\sum_{v\in L(T_1)}  q^{r(T_1,v)}Q(T_1-v) +$$
$$ Q(T_1){E_1+E_2-1 \choose E_1,E_2-1}_q\sum_{v\in L(T_2)}  q^{r(T_2,v)}Q(T_2-v) =$$
$$Q(T_1) Q(T_2)(q^{E_2}{E_1+E_2-1 \choose E_1-1,E_2}_q +
{E_1+E_2-1\choose E_1,E_2-1}_q)=$$
$$ Q(T_1) Q(T_2){E_1+E_2\choose E_1,E_2}_q \mbox{ jak potrzeba}.\footnote{Mo\.zna zmodyfikowa\'c wielomian 
$Q(T)$ w ten spos\'ob, \.ze formu{\l}a z Twierdzenia \ref{Twierdzenie 2.2} stanie si{\c e} homomorfizmem. W tym celu definiujemy
$Q'(T)=\frac{Q(T)}{[|E(T)|]_q!}$.  Mamy teraz  $Q'(T_1\vee T_2)= Q'(T_1)Q'(T_2)$. Wad{\c a} takiej definicji  jest to, \.ze $Q'(T)$ nie 
zawsze jest wielomianem a cz{\c e}sto jest tylko funkcj{\c a} wymiern{\c a}.}$$
\rule{1.6ex}{1.6ex}
\begin{wniosek}\label{Wniosek 2.3}
\begin{enumerate}
\item[(i)] Je\'sli p{\l}askie drzewo z korzeniem jest bukietem $k$ drzew
 (\psfig{figure=Tree-seq-vee.eps,height=1.5cm}) to znaczy
$$T=T_{k} \vee ... \vee  T_2 \vee T_1,  \mbox{ wtedy }$$
$$Q(T)= \binom{E_k+E_{k-1}+...+E_1}{E_k,E_{k-1},...,E_1}_qQ(T_k)Q(T_{k-1})\cdots Q(T_1),$$
gdzie $E_i=|E(T_i)|$ jest liczb{\c a} kraw{\c e}dzi $T_i$.
\item[(ii)] (Formu{\l}a w postaci produktu po stanach)
$$ Q(T) = \prod_{v\in V(T)}W(v), $$
gdzie $W(v)$ jest wag{\c a} wierzcho{\l}ka (mo\.zemy nazywac to wag{\c a} Boltzmanna) zdefiniowan{\c a} przez:
$$W(v)= \binom{E(T^v)}{E(T^v_{k_v}),...,E(T^v_{1})}_q,$$
gdzie $T^v$ jest poddrzewem $T$ z korzeniem $v$ (czesc $T$ powy\.zej $v$, innymi s{\l}owy $T^v$ wyrasta z $v$)
i $T^v$ mo\.ze byc roz{\l}o\.zone na bukiet drzew: $T^v= T^v_{k_v} \vee ... \vee  T^v_2 \vee T^v_{1}.$
\item[(iii)](niezale\.zno\'s\'c od w{\l}o\.zenia) Wielomian skubania, $Q(T)$ nie zale\.zy od w{\l}o\.zenia jest wi{\c e}c
niezmiennikiem drzewa z korzeniem.
\item[(iv)] (zmiana korzenia). Niech $e$ bedzie kraw{\c e}dzi{\c a} drzewa $T$ z koncami $v_1$ i
$v_2$ a $E_1$ jest liczba kraw{\c e}dzi od strony $v_1$ drzewa, a $E_2$ jest liczba kraw{\c e}dzi $T$
po stronie $v_2$ kraw{\c e}dzi $e$, gdzie
$$\mbox{ $T=$\psfig{figure=Tree-change-base.eps,height=1.6cm}. Wtedy }
Q(T,v_1)=\frac{[E_1+1]_q}{[E_2+1]_q}Q(T,v_2).$$
\end{enumerate}
\end{wniosek}

{\bf Dow\'od.}
(i) Wz\'or z (i)  wynika przez u\.zycie kilka razy formu{\l}y:
$$Q(T_2 \vee T_1)= {E(T_2)+E(T_1) \choose E(T_2),E(T_1)}_qQ(T_2)Q(T_1),$$
jako, \.ze mamy:
$$\binom{a_k+a_{k-1}+...+a_2+a_1}{a_k,a_{k-1},...,a_2,a_1}_q=
\binom{a_{k-1}+...+a_2+a_1}{a_{k-1},...,a_2,a_1}_q \binom{a_k+a_{k-1}+...+a_2+a_1}{a_k,a_{k-1}+...+a_2+a_1}_q=...$$
$$= \binom{a_2+a_1}{a_2,a_1}_q\binom{a_3+a_2 + a_1}{a_3,a_2+a_1}_q\binom{a_4+a_3+a_2 + a_1}{a_4,a_3+a_2+a_1}_q...
\binom{a_k+a_{k-1}+...+a_2+a_1}{a_k,a_{k-1}+...+a_2+a_1}_q$$
(ii) Formula (ii) wynika przez u\.zycie (i) wiele razy.\\
(iii) Niezale\.zno\'s\'c od w{\l}o\.zenia wynika z faktu, \.ze formu{\l}a produktowa z (ii) nie zale\.zy od w{\l}o\.zenia.\\
(iv) Por\'ownujemy formu{\l}e z Twierdzenia 2.2 dla $v_1$ i $v_2$ otrzymuj{\c a}c:
$$Q(T,v_1)= {E_1+E_2+1 \choose E_1, E_2+1}_q Q(T_1)Q(T_2) \mbox{ oraz } $$
$$Q(T,v_2)= {E_1+E_2+1 \choose E_1+1, E_2}_q Q(T_1)Q(T_2);$$
teraz formu{\l}a zmiany korzenia wynika natychmiast.

\rule{1.6ex}{1.6ex}\\

Podamy teraz kilka wniosk\'ow opisuj{\c a}cych proste bazowe w{\l}asno\'sci $Q(T)$ i maj{\c a}cych
pewne znaczenie w teorii w{\c e}z{\l}\'ow.
\begin{wniosek}\label{Wniosek 2.4}
\begin{enumerate}
\item[(1)] $Q(T)$ jest postaci $c_0+c_1q+...+c_Nq^N$ gdzie:\\
(i) $c_0=1=c_N$, $c_i>0$ dla ka\.zdego $i\leq N$ oraz\\
(ii) Niech $k_v$ oznacza liczb{\c e} kraw{\c e}dzi wyrastaj{\c a}cych z wierzcho{\l}ka $v$ do g\'ory, inaczej m\'owi{\c a}c
jest to stopie\'n wierzcho{\l}ka $v$ w drzewie $T^v$ wyrastaj{\c a}cym z $v$. W tej notacji, dla drzewa z korzeniem $T$ maj{\c a}cym
cho\'c jedn{\c a} kraw{\c e}d\'z, wsp\'o{\l}czynnik $c_1$ dany jest nast{\c e}puj{\c a}c{\c a} formu{\l}{\c a}:
$$c_1 = \sum_{v\in V(T)} (k_v-1).$$
W szczeg\'olno\'sci w przypadku nietrywialnego drzewa binarnego, dla ka\.zdego wierzcho{\l}ka r\'o\.znego od li\'scia
mamy $k_v= 2$ a wi{\c e}c $c_1 = |V(T)|-|L(T)|$.\\
(iii) $c_i=c_{N-i}$ (tzn. $Q(T)$ jest wielomianem symetrycznym (palindromicznym)).
\item[(2)] \begin{enumerate}
\item[(i)] $Q(T)$ jest iloczynem $q$-symboli dwumianowych (typu ${a+b\choose a}_q$).
\item[(ii)] $Q(T)$ jest iloczynem wielomianow cyklotomicznych\footnote{Przypomnijmy, \.ze $n$-ty wielomian cyklotomiczny
jest minimalnym wielomianem, kt\'orego pierwiastkiem jest $e^{2\pi i/n}$.
Mo\.zemy go zapisa\'c: $\Psi_n(q)=\prod_{\omega^n=1,\omega^k\neq 1, k<n}(q-\omega)$.
Dla przyk{\l}adu $\Psi_4(q)=1+q^2$, $\Psi_6(q)=1-q+q^2$.}.
\end{enumerate}
\item[(3)] Stopie\'n wielomianu $N=deg Q(T)$ mo\.zna zapisa\'c formu{\l}{\c a}:
$$N=deg Q(T)=\sum_{v\in V(T)}(\sum_{1\leq i < j \leq k_v}E_i^vE_j^v), $$
gdzie, jak we wniosku \ref{Wniosek 2.3}(ii),
$T^v$ jest poddrzewem $T$ z korzeniem $v$ (cze\'s\'c $T$ powy\.zej $v$, innymi s{\l}owy $T^v$ wyrasta z $v$)
i $T^v$ mo\.ze by\'c roz{\l}o\.zone na bukiet drzew: $T^v= T^v_{k_v} \vee ... \vee  T^v_2 \vee T^v_{1}.$
\end{enumerate}
\end{wniosek}

{\bf Dow\'od.}
1(i) naj{\l}atwiej wykaza\'c z definicji; nietrudno zobaczy\'c, \.ze wyraz sta{\l}y otrzymujemy w jednoznaczny spos\'ob bior{\c a}c
li\'s\'c najbardziej z prawej strony drzewa i powtarzaj{\c a}c to w ka\.zdym kroku obliczen; stad $c_0=1$.
Podobnie, najwy\.zsz{\c a} pot{\c e}g{\c e} $q$  otrzymujemy bior{\c a}c
li\'s\'c polo\.zony najbardziej z lewej strony drzewa w definicji rekurencyjne $Q(T)$, i ze wszystkie inne wybory dadz{\c a}  mniejsz{\c a}
pot{\c e}g{\c e}.\\
Warunek $c_i>0$ dla ka\.zdego $i\leq n$  wymaga uwa\.zniejszego przyjrzenia si{\c e} liczeniu $Q(T)$ ale dow\'od jest zupe{\l}nie elementarny;
zostawiamy go czytelnikom jako, \.ze w Twierdzeniu poni\.zej dowodzimy du\.zo mocniejszy warunek (ale u\.zywaj{\c a}c znanego z literatury
nietrywialnego faktu). \\
(1)(ii) Najpro\'sciej u\.zyc tutaj formu{\l}y produktowej i zobaczy\'c kontrybucj{\c e} ka\.zdego wierzcho{\l}ka do warto\'sci $c_1$:\\
Mamy teraz ${a+b \choose a, b}_q= 1+q+...$ je\'sli $a,b >0$. Z tego wynika, u\.zywaj{\c a}c formuly z dowodu wniosku \ref{Wniosek 2.3}, \.ze
${a_1+a_2+...+a_k \choose a_1,a_2,...a_k}_q = 1+ (k-1)q + ...$ dla $a_1,a_2,...,a_k >0$.  Ko\'ncowy wynik dla $c_1$ wynika z formu{\l}y
produktowej  wniosku \ref{Wniosek 2.3}.\\

(1)(iii) Najpro\'sciej zacz{\c a}\'c od sprawdzenia symetrii $q$-wsp\'o{\l}czynnika Newtona. Mamy:
$$ {a+b \choose a,b}_{q^{-1}} = q^{-ab} {a+b \choose a,b}_q.$$
Dalej u\.zywamy wniosku \ref{Wniosek 2.3} i faktu ze iloczyn wielomian\'ow symetrycznych jest symetryczny.\\
 (2) i (3) Warunki te wynikaj{\c a} bezpo\'srednio ze wzoru produktowego we wniosku \ref{Wniosek 2.3}(ii).
\rule{1.6ex}{1.6ex}\\

\begin{cwiczenie} Znale\'z\'c wz\'or na $c_2$ wielomianu $Q(T)$ dla ka\.zdego drzewa z korzeniem $T$.
\end{cwiczenie}

Trudniejszym do wykazania jest nast{\c e}puj{\c a}cy interesuj{\c a}cy fakt.
\begin{twierdzenie}\label{Theorem 2.6}
Ciag $c_0,c_1,...,c_N$ jes unimodalny, tzn dla pewnego $j$ (tutaj $\lfloor\frac{N}{2}\rfloor$ lub $\lceil \frac{N}{2})\rceil$) mamy
$c_0\leq c_1 \leq ... \leq c_j \geq c_{j+1} \geq \ldots \geq c_N$.

\end{twierdzenie}
{\bf Dow\'od.}
Unimodalno\'s\'c wynika z nietrywialnego faktu, pokazanego przez Sylvestra, \.ze $q$-symbole dwumianowe sa unimodalne, ponadto u\.zywamy
prostszej obserwacji, \.ze
produkt symetrycznych (palindromicznych) dodatnich unimodalnych wielomian\'ow jest unimodalny (see \cite{Sta-1,Win}).
\rule{1.6ex}{1.6ex}\\

Mo\.zna pr\'obowa\'c uog\'olni\'c  Twierdzenie \ref{Theorem 2.6}(iii) odpowiadaj{\c a}c na nast{\c e}puj{\c a}ce pytanie: dla jakich drzew
  ich wielomian $Q(T)$ jest \'sci\'sle unimodalny, to znaczy
$$c_0 < c_1 < ... < c_{\lfloor N/2 \rfloor} = c_{\lceil N/2 \rceil} > \ldots > c_N;$$
(por\'ownaj \cite{Pak-Pan}.).
\section{Komentarze i powi{\c a}zania}

Jak podkre\'sla{\l}em we wst{\c e}pie wielomian ``skubania" jest ciekawy sam w sobie ale nigdy bym go nie skonstruowa{\l} czy odkry{\l} gdybym
nie spostrzeg{\l} jego cienia w moich badaniach w teorii w{\c e}z{\l}\'ow. Dla mnie motywacj{\c a} by{\l}a praca z moim by{\l}ym studentem
Mieczys{\l}awem D{\c a}bkowskim i jego studentem doktoranckim Changsongiem Li dotycz{\c a}ca modu{\l}\'ow motkowych uog\'olnionego (kratowego)
 skrzy\.zowania; rysunek 3.1, \cite{DLP}.
W pracy tej, nie u\.zywamy wielomianu $Q(T)$ jako, \.ze by{\l} on odkryty po napisaniu pracy. B{\c edziemy go u\.zywa\'c w przysz{\l}ych badaniach \cite{D-P}.
W{\c e}z{\l}y motywuj{\c a} te\.z pewne uog\'olnienie wielomianu skubania, przez wyposa\.zenie drzewa w funkcj{\c e} op\'o\'zniaj{\c a}c{\c a},
$f: L(V) \to {\mathcal N}=\{n\in \Z\ | \ n \geq 1\}$,
mowi{\c a}c{\c a} kiedy mo\.zemy u\.zyc li\'scia w formule rekurencyjnej (dajemy tu du\.zo swobody czytelnikom jak zdefiniowa\'c taki wielomian ``skubania"
z funkcj{\c a} op\'o\'zniaj{\c a}c{\c a}).

Zwi{\c a}zek wielomanu ``skubania" z nawiasem Kauffmana splot\'ow jest precyzyjny ale trudny do zwi{\c e}z{\l}ego zapisania. Aby mie\'c pewn{\c a}
ide{\c e} powiem kr\'otko, \.ze dotyczy on badania uog\'olnionego (kratowego) \\
skrzy\.zowania (rysunek 3.1) przy za{\l}o\.zeniu, \.ze
ka\.zde skrzy\.zowanie mo\.zna rozwi{\c a}za\'c relacj{\c a} motkow{\c a} Kauffmana, jak na rysunku 3.2, a ka\.zd{\c a} trywialn{\c a} zamkni{\c e}t{\c a}
sk{\l}adow{\c a} mo\.zna wyeliminowa\'c zast{\c e}puj{\c a}c j{\c a} wielomianem Laurenta $-A^2 - A^{-2}$.

\centerline{\psfig{figure=mncrossingRot.eps,height=3.9cm}}\ \\
\centerline{ Rysunek 3.1; $T_{m\times n}$: $m\times n$ skrzy\.zowanie kratowe}
\ \\ \ \\
\centerline{\psfig{figure=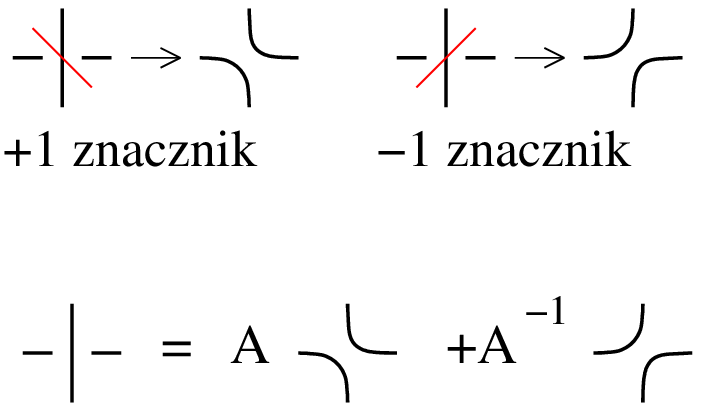,height=4.3cm}}\ \\
\centerline{ Rysunek 3.2; wyg{\l}adzenia Kauffmana skrzy\.zowania i relacja motkowa}
\ \\ \

Wieloman skubania mo\.zna uog\'olni\'c do dowolnych graf\'ow. Jesli graf $G$ ma punkt wyr\'o\.zniony $b$ to niezmiennik $Q(G,b)$ jest po prostu
rodzin{\c a} (wielo-zbiorem) wielomian\'ow skubania wszystkich drzew rozpinajacych graf $G$ z korzeniem $b$ (kr\'otk{\c a} wzmiank{\c e} na temat
tego niezmiennika zamie\'sci{\l}em w \cite{Prz-2}).
Zwi{\c a}zek naszego niezmiennika ze znanymi niezmiennikami graf\'ow wymaga szczeg\'o{\l}owych bada\'n.

Nasz wielomian ma te\.z zwi{\c a}zek z algebr{\c a} homologiczn{\c a}:\\
Niech $\mathcal C$ bedzie kompleksem {\l}a\'ncuchowym to znaczy ci{\c a}giem grup abelowych, $C_n$ i homomorfizm\'ow $\partial_n:C_n \to C_{n-1}$
tak, \.ze $\partial_{n-1} \partial_n = 0$. Na bazie kompleksu {\l}ancuchowego definiujemy grupy homologii przez
$H_n(\mathcal C)= \ker \partial_n/im(\partial_{n+1}$.  Bardzo cz{\c e}sto homomorfizm brzegu, $\partial_n$ jest sum{\c a} alternujac{\c a}
homomorfizm\'ow zwanych funkcjami \'sciany
$\partial_n= \sum_{i=0}^n (-1)^id_i$. Micha{\l} Kapranov zada{\l} pytanie co si{\c e} stanie gdy $(-1)^i$ zostanie zast{\c a}pione przez $q^n$,
to znaczy zdefiniujemy $$\partial^q_n= \sum_{i=0}^n q^id_i.$$ Zauwa\.zy{\l} on, \.ze je\'sli $q$ jest $k$-tym pierwiastkiem z jedno\'sci r\'oznym
od $1$ (tzn. $q^k=1, q\neq 1$) to $k$-ta iteracja $\partial^q$ daje zero ($\partial^q_{n-k+1}...\partial^q_{n-1} \partial^q_n = 0$), \cite{Kapr}.
To, \.ze idea Kapranova jest w jaki\'s spos\'ob zwi{\c a}zana z $q$-wielomianem drzewa z korzeniem wydaje si{\c e} jasne,
ale precyzyjne zwi{\c a}zki wymagaj{\c a} bada\'n (mo\.ze Ty czytelniku tym si{\c e} zajmiesz?).


\ \\ \ \\
Wydzia{\l} Matematyki, \\
George Washington University,\\
oraz University of Maryland College Park,\\
a tak\.ze Uniwersytet Gda\'nski


\begin{thebibliography}{999}

\bibitem [DLP]{DLP}
M.~K.~Dabkowski, C.~Li, J.~H.~Przytycki,
Catalan states of lattice crossing,
 {\it Topology and its Applications}, 182, 2015, 1-15;\\
e-print: \ {\tt arXiv:1409.4065 [math.GT]} 

\bibitem [D-P]{D-P}
M.~K.~Dabkowski, J.~H.~Przytycki,
Catalan states of lattice crossing II, in preparation.

\bibitem[K-C]{K-C} V.~Kac, P.~Cheung, Quantum calculus, Universitext, Springer 2002. 

\bibitem [Kapr]{Kapr}
M.~M.~Kapranov, On the q-analog of homological algebra, {\it Jour. Knot Ram.}, to appear; 
e-print \  {\tt arXiv:q-alg/9611005}

\bibitem [Lod]{Lod}
J-L.~Loday, Cyclic Homology, Grund. Math. Wissen.
Band 301, Springer-Verlag, Berlin, 1992 (second edition, 1998).

\bibitem[Pak-Pan]{Pak-Pan}
I.~Pak, G.~Panova, 
Strict Unimodality of $q$-Binomial Coefficients, 
{\it C. R. Acad. Sci. Paris, Ser. I}, 351 (11-12), June 2013, 415-418.
e-print:\ {\tt 	arXiv:1306.5085 [math.CO]}

\bibitem [Prz]{Prz}
J.~H.~Przytycki, Knots and Graphs: two centuries of interaction,
Proceedings of Knots-2013, Mohali, India, to appear in {\it Contemporary Mathematics}, 2016.


\bibitem [Sta]{Sta}
R.~P.~Stanley, Log-concave and unimodal sequences in algebra, combinatorics, and geometry,
{\it Ann. New York Sci.}, 576, New York Acad. Sci., New York, 1989, 500-535.

\bibitem[Win]{Win}
A.~Wintner, Asymptotic Distributions and Infinite Convolutions, Edwards Brothers, Ann Arbor, Michigan, 1938.

\end{thebibliography}

\begin{thebibliography}{999}

\bibitem [DLP]{DLP}
M.~K.~Dabkowski, C.~Li, J.~H.~Przytycki,
Catalan states of lattice crossing,
 {\it Topology and its Applications}, 182, March, 2015, 1-15;\\
e-print: \ {\tt arXiv:1409.4065 [math.GT]}

\bibitem [D-P]{D-P}
M.~K.~Dabkowski, J.~H.~Przytycki,
Catalan states of lattice crossing II, in preparation.

\bibitem [Kapr]{Kapr}
M.~M.~Kapranov, On the q-analog of homological algebra, {\it Journal Knot Theory Ram.} to appear 2016.
e-print \  {\tt arXiv:q-alg/9611005}

\bibitem [Lod]{Lod}
J-L.~Loday, Cyclic Homology, Grund. Math. Wissen.
Band 301, Springer-Verlag, Berlin, 1992 (second edition, 1998).

\bibitem[Pak-Pan]{Pak-Pan}
I.~Pak, G.~Panova,
Strict Unimodality of $q$-Binomial Coefficients,
{\it C. R. Acad. Sci. Paris, Ser. I}, 351 (11-12), June 2013, 415-418.
e-print:\ {\tt  arXiv:1306.5085 [math.CO]}

\bibitem [Prz-1]{Prz-1}
J.~H. Przytycki, Progress in distributive homology: from q-polynomial of rooted trees to Yang-Baxter homology,
in {\it Mathematisches Forschungsinstitut Oberwolfach} - Algebraic Structures in Low-Dimensional Topology,
Report No. 26/2014, 2014, 47-50;\
e-print:\ {\tt  arXiv:1406.6499 [math.GT]}

\bibitem [Prz-2]{Prz-2}
J.~H.~Przytycki, Knots and Graphs: two centuries of interaction,
Proceedings of Knots-2013, Mohali, India, to appear in {\it Contemporary Mathematics}, 2016.
`

\bibitem [Sta-1]{Sta-1}
R.~P.~Stanley, Log-concave and unimodal sequences in algebra, combinatorics, and geometry,
{\it Ann. New York Sci.}, 576, New York Acad. Sci., New York, 1989, 500-535.

\bibitem [Sta-2]{Sta-2}
R.~P.~Stanley, Algebraic Combinatorics, Walks, Trees, Tableaux, and more; Undergraduate Texts in Mathematics, Springer, 2013.

\bibitem[Win]{Win}
A.~Wintner, Asymptotic Distributions and Infinite Convolutions, Edvards Brothers, Ann Arbor, Michigan, 1938.


\end{thebibliography}
\end{document}